\newtheorem{theorem}{Theorem}
\newtheorem{lemma}[theorem]{Lemma}
\title{Reversible perturbations of conservative H\'enon-like maps}
\author{
	M.\,S.\,Gonchenko$^1$, S.\,V.\,Gonchenko$^2$, 
	K.\,Safonov$^3$ 
	\\
	{\small
		$^1$ Universitat Polit\`ecnica de Catalunya, Barcelona, Spain}\\
	{\small		$^2$ 
Mathematical Center ``Mathematics of Future Technologies'',}\\
{\small Lobachevsky State University of Nizhny Novgorod, Nizhny Novgorod, Russia}\\
	{\small		$^3$ National Research University Higher School of Economics,} \\
{\small 25/12 Bolshaya Pecherskaya Ulitsa, 603155 Nizhny Novgorod, Russia}\\
{\small { \texttt{marina.gonchenko@upc.edu}, 	\texttt{sergey.gonchenko@mail.ru}, 			 \texttt{safonov.klim@yandex.ru}	
} }
}
\date{}
\begin{document}
\maketitle

\begin{abstract}
For area-preserving H\'enon-like maps and their compositions, we consider smooth perturbations
that keep the reversibility of the initial maps but destroy their conservativity. For constructing such perturbations, we use two methods, the original method based on reversible properties of maps written in the so-called cross-form, and the classical Quispel-Roberts method based on a variation of involutions of the initial map. We study symmetry breaking bifurcations of symmetric periodic points in reversible families containing quadratic conservative orientable and nonorientable H\'enon maps as well as the product of two asymmetric H\'enon maps (with the Jacobians $b$ and $b^{-1}$).
\end{abstract}

\section{Introduction}

Among dynamical systems of various classes, the so-called {\em reversible systems}, which are characterized by invariance with respect to time reversal, are of special interest that can be explained by two main circumstances: first, such systems often appear in applications \cite{LR98}, and second, they form a class of systems with special symmetries, and therefore require the development of very specific mathematical methods for their study \cite{D76,RQ92}. To date, much is already known about the dynamics of reversible systems and the list of related papers is very vast, the reader can find it, for example, in the review paper \cite{LR98}. Especially a lot of fundamental results were obtained in the case of two-dimensional reversible maps, which are the main object of our article, see e.g. \cite{RQ92}--\cite{LT12}.

Recall that a $C^r$-map (diffeomorphism) $f$ is said to be \emph{reversible} if it is conjugate to its inverse $f^{-1}$ by an involution ${\cal G}$, i.e. the following condition holds
$f = {\cal G} \circ f^{-1} \circ {\cal G}$, where ${\cal G}^2=Id$ and the diffeomorphism ${\cal G}$ is also at least $C^r$-smooth.

Recently, the study of dynamics of reversible systems has got a new motivation
due to the discovery of the new, third, form of dynamical chaos,
the so-called {\em mixed dynamics} \cite{Gon16,GT17}, which
is characterized by the
the principal inseparability of dissipative elements of dynamics (attractors and repellers)
from conservative ones.
The above property makes the mixed dynamics fundamentally different from the two other classical forms of dynamical chaos, the conservative and dissipative chaos.

The most known type of conservative dynamics is demonstrated by Hamiltonian systems
or, more generally, systems preserving the phase volume. From the point of view of topological
dynamics, the conservative dynamics is characterized by the fact that the entire phase space of the
corresponding system is chain transitive, i.e., any two points can be connected by $\varepsilon$-orbits
for any $\varepsilon >0 $.  Recall that a sequence of points $x_1,...,x_n$ is called an $\varepsilon$-orbit (of length $n$) for a map $f: y_{i+1} = f(y_i)$
if $\mbox{{\rm dist}}\left(f(x_j), x_{j+1}\right) < \varepsilon$ for all $j=1,...,n-1$. We will say that an $\varepsilon$-orbit $x_1,...,x_n$ connects the
points $x_1$ and $x_n$.

The dissipative dynamics has a completely different nature: it is associated with the existence of
``holes'' (absorbing and repelling domains) in the phase space $M$. Recall that an open domain $D$
is said to be absorbing (repelling) if its image under the action of a map $T$ (a map $T^{-1}$)
lies strictly inside it.
By definition, a dissipative attractor, closed stable invariant set,  resides in some absorbing domain $D_a$,
analogously, a (dissipative) repeller resides in some repelling domain $D_r$.
Accordingly, we have $D_a\cap D_r = \emptyset$ here.

As for the mixed dynamics, unlike the conservative case, the phase space is not chain transitive, since infinitely many  dissipative attractors and repellers exist here,  which  intersect along closed invariant sets, the so-called reversible cores, having neutral (conservative-like) type of stability.
The latter means that
the reversible core itself attracts nothing and repels nothing -- for any nearby point, its forward orbit tends to the nearest attractor and backward orbit tends to the nearest repeller.\footnote{Note that the reversible core can be very vast and occupy very big part of the phase space that certain examples  show, see e.g. \cite{GGK13,GGKT17}.}
In other words, here, unlike the dissipative case, it is
impossible to construct a set of disjoint absorbing and repelling domains. The explanation of this phenomenon
from the topological point of view
was given in \cite{GT17} based on the concept of attractor going back to D. Ruelle \cite{R81}.

Note that one of the main fundamental properties of systems with mixed dynamics, which
can also be considered as a criterion and even as its definition (from the mathematical point of view), is the existence of so-called {\em absolute Newhouse regions} \cite{GST97,T11,T15}.
Recall that Newhouse regions are open regions in
the space of dynamical systems (or in the parameter space) in which systems with homoclinic
tangencies are dense (or values of parameters corresponding to systems with homoclinic tangencies
are dense) \cite{N79,GST93b,PV94,R95}.
It was shown by Newhouse himself \cite{N74} that, in the dissipative case,
there may exist Newhouse regions in which systems with infinitely many stable and saddle periodic
orbits are dense and, moreover, generic, i.e., they form subsets of the second Baire category (see also \cite{GST08}).
The absolute Newhouse regions are characterized
by the following property: systems with infinitely many periodic orbits of all possible types (sinks,
sources, and saddles)
are generic in such regions, and
these orbits are inseparable from each other, i.e., the closures of the sets of orbits of different types have nonempty intersections.

The absolute Newhouse regions exist for two-dimensional reversible  maps as well \cite{LS04,DGGLS13,GGS20}.
However, the dynamics of systems from reversible absolute Newhouse regions is much richer than for those in general case. In particular, as shown in \cite{LS04}, diffeomorphisms with
infinitely many coexisting periodic sinks, sources, and symmetric elliptic periodic orbits are generic in reversible absolute
Newhouse regions.

A periodic orbit of a reversible map $f$ is called {\em symmetric} if it is invariant
with respect to the involution ${\mathcal G}$, i.e., if its points are posed ${\mathcal G}$-symmetrically around the set $\mbox{{\rm Fix}}(h) = \{x: {\mathcal G}(x) = x\}$ of fixed points of the
involution, thus, if $Q$ is such an orbit, then $Q = {\mathcal G}(Q)$. In the two-dimensional case, a symmetric periodic orbit of an orientable reversible map $f$
has multipliers $\lambda$ and $\lambda^{-1}$. In general case $\lambda\neq \lambda^{-1}$, symmetric periodic points can be divided into two
types: saddle points, if $\lambda\neq \pm 1$ is real, and elliptic points, if $\lambda_{1,2} = e^{\pm i\varphi}$ and $0<\varphi<\pi$. The saddle
points are rough (structurally stable). As for elliptic points, although they are very similar to
conservative elliptic points \cite{Sevruyk}, they can differ greatly from the latter, as shown by the following results \cite{GT17,GLRT14}:

\begin{itemize}
\item
{\em all symmetric elliptic periodic orbits of a $C^r$-generic $(r = 1,...,\infty )$
two-dimensional reversible map are limits of periodic sinks and sources.}\footnote{The genericity is understood here in the sense that reversible maps with the indicated properties form a subset of the second Baire category in the space of $C^r$-smooth reversible maps having symmetric elliptic periodic orbits.}
\item
{\em all symmetric elliptic periodic orbits of a $C^r$-generic ($r = 1,...,\infty$) two-dimensional reversible
map are reversible cores;}
\item
{\em the generic elliptic periodic point of a reversible map is totally stable, i.e., stable under permanently acting perturbations (Lyapunov stable for $\varepsilon$-orbits), while in the case of area-preserving maps any such orbit is unstable (although it is Lyapunov orbitally stable).}
\end{itemize}

Thus, the reversible mixed dynamics manifests itself locally, but wherever symmetric elliptic points exist.
This important circumstance, of course, testifies to the fact that mixed dynamics should be viewed
as one of the fundamental properties of reversible systems. Moreover, the above result shows that symmetric (elliptic and other) orbits form a skeleton of global mixed dynamics, as they compose naturally that invariant set, reversible core, which simultaneously separates and connects the attractor and repeller.

The other important circumstance testifying to the universality of mixed dynamics in reversible
systems is what can be called the conjecture on Reversible Mixed Dynamics:

\begin{itemize}
\item
{\em Near any reversible map with a symmetric homoclinic tangency or a symmetric
nontransversal heteroclinic cycle, there are absolute reversible Newhouse regions.}
\end{itemize}

This RMD-conjecture was formulated in \cite{DGGLS13} and was almost immediately proved in \cite{GLRT14} for Newhouse
regions from the space of reversible systems in the $C^r$-topology with $2\leq r \leq\infty$. In the analytical case, as well as in the case of parameter
families, the RMD-conjecture was proved only for the so-called a priori non-conservative reversible diffeomorphisms \cite{LS04,GGS20,DGGL18},
when the (heteroclinic) cycle contains non-conservative elements (for example, saddles with the
Jacobians greater and less than one, or pairs of nonsymmetric homoclinic tangencies of a symmetric
saddle point, as in \cite{DGGL18}), as well as for reversible maps with symmetric heteroclinic cycles of conservative
type \cite{DGGLS13}.

Essentially, it remains to consider only two most interesting cases:
{\em reversible maps with symmetric quadratic and cubic homoclinic tangencies}.
However, these two cases are also the most difficult, since, in the principle plan, the main problem of this topic is connected
with the study of symmetry breaking bifurcations in first return maps constructing near orbits of symmetric homoclinic tangencies.
In the main order, these maps coincide with the conservative H\'enon-like maps: the standard H\'enon map $\bar x = y,\; \bar y = M - x - y^2$ (in the case of symmetric quadratic homoclinic tangency) and the reversible cubic H\'enon maps $\bar x = y,\; \bar y = -x + My \pm y^3$ (appearing near symmetric cubic homoclinic tangencies of two different types).

Both these maps are only certain truncated normal forms for the complete first return maps, and they demonstrate exclusively conservative dynamics.
What can be said about the dynamics of these maps under perturbations that keep the  reversibility, and how can dissipative dynamics elements appear here, such as periodic sinks, sources, or saddles with a Jacobian other than 1?
This is still open problem which requires solving the following issues.
\begin{itemize}
\item
How to construct perturbations of area-preserving H\'enon-like maps
which maintain their reversibility, but destroy the conservativity?
\item
What is the structure of symmetry breaking bifurcations under such perturbations?
\end{itemize}
We consider these questions to be relevant and interesting
not only because their solution will give an addition to the theory of H\'enon-like maps \cite{Henon76,S79,Bir87,DM00,GGO17}, but also it will make a certain contribution to the theory of mixed dynamics of reversible systems.

In the current paper we deal with these questions. Accordingly, the paper is divided into two parts. In the first one, Sections~\ref{sec:revpert} and~\ref{sec:QR}, we consider two types of methods for the construction of reversible perturbations for conservative H\'enon-like maps.
The first method looks to be new: we call it ``cross-form perturbations'', see Section~\ref{sec:crossform}.  We apply this method for the conservative H\'enon-like maps
(\ref{eq:HenonMap}), see Section~\ref{sec:crossformH1}, and for compositions of two H\'enon-like maps, see Sections~\ref{sec:crossformH-2} and~\ref{sec:crossform2H}.
The second method is the classical method proposed in the paper \cite{RQ92} by Quispel and Roberts. We apply this Quispel-Roberts method  for map (\ref{eq:HenonMap}) in Section~\ref{sec:QR} and for the nonorientable conservative H\'enon-like map of the form $\bar x = - y, \bar y = -x + F(y)$ in Section~\ref{sec:asymH-}.\footnote{Note that,
formally, the cross-form perturbations method and Quispel-Roberts method give different results, at first sight. Of course, the Quispel-Roberts  method is more general, since it can be apply to any reversible maps, however, it is not very clear how certain perturbations can be obtained by means of it, in particular, those that the cross-form method gives.}

In the second part of the paper, Section~\ref{sec:sbrbif}, we study symmetry breaking bifurcations in one-parameter families of reversible non-conservative H\'enon-like maps, using those perturbations that were constructed in the first part of the paper. We show that the simplest bifurcations of this type are reversible pitchfork bifurcations of periodic orbits. We consider such families in the cases of the product of two (quadratic) H\'enon maps (Section~\ref{sec:sbrbif_dggls13}), the nonorientable conservative H\'enon map (Section~\ref{sec:bifnor}) and the orientable conservative H\'enon map (Sections~\ref{sec:sborH} and~\ref{sec:FO6}). In the first two cases we show that even symmetric fixed points can undergo pitchfork bifurcations and recover their structure. It is interesting that, in the case of orientable conservative H\'enon map, this bifurcation occurs starting only with an orbit of period 6 (no such bifurcation takes place for orbits of less period), that is very surprising.

\section{On construction of reversible perturbations for H\'enon-like maps and their compositions} \label{sec:revpert}

The conservative H\'enon-like maps are the two-dimensional planar diffeomorphisms that can be represented in the form
\begin{equation}
H: \;
\bar x = y,\;\;\;
\bar y = - x + F(y),
\label{eq:HenonMap}
\end{equation}
where $F(y)$ is some nonlinear function (e.g. a polynomial).
Map (\ref{eq:HenonMap}) is area-preserving, with the Jacobian equal to 1, and reversible with respect to the linear involution  $h: (x, y) \to (y, x)$.
Indeed, $H^{-1}$ takes the form $x=\bar y, y = - \bar x + F(\bar y)$; the relation $h \circ H^{-1} \circ h$ means, due to the simplicity of $h$, that we need to make interchanges
$x \leftrightarrow y, \bar x \leftrightarrow \bar y$ in the formula for $H^{-1}$, after which we get (\ref{eq:HenonMap}).

In this section we consider two methods for the construction of such sufficiently smooth (analytic) perturbations of H\'enon-like maps (\ref{eq:HenonMap}) and their compositions that destroy the conservativity of these maps but keep their reversibility with respect to the involution $h$.

\subsection{Cross-form perturbations} \label{sec:crossform}

The first method to obtain reversible perturbations is based on the following cross-form map
\begin{equation}
g: (x,y)\to (\bar x, \bar y)\;: \;\;\; \bar x = G(x, \bar y),\;\;\;
y = G(\bar y, x).
\label{eq:RevMap}
\end{equation}
Note that the map~(\ref{eq:RevMap}) is reversible with respect to the involution $h: (x,y)\to (y,x)$.
The proof is immediate: the map $g^{-1}$ has the form $x = G(\bar x, y),\;\;\; \bar y = G(y, \bar x)$, and the composition $h\circ g^{-1}\circ h$ means that we need to make interchanges
$x \leftrightarrow y$ and $\bar x \leftrightarrow \bar y$ in $g^{-1}$, which leads to (\ref{eq:RevMap}).

We introduce certain notations for the derivatives of functions:
\begin{itemize}
\item
$F^\prime(\rho)$ denotes the first derivative of the function $F(y)$ at the point $y=\rho$;
\item
for a smooth function $s(x,y)$, we denote
$$
u(x,y) = \frac{\partial s(x,y)}{\partial x}, \;\; v(x,y) = \frac{\partial s(x,y)}{\partial y}.
$$
and
$$
s_x(\xi,\eta) = u(\xi,\eta),\; s_y(\xi,\eta) = v(\xi,\eta).
$$
Thus, the subscripts  $x$ and $y$ means the differentiation with respect to the first and second variables, respectively.
\end{itemize}

\begin{lemma} \label{lm:crosjac}
The Jacobian of map (\ref{eq:RevMap})  takes the form
\begin{equation}
J \; = \; \frac{G_x(x, \bar y)}{G_x(\bar y, x)}.
\label{eq:JRevMap}
\end{equation}
\end{lemma}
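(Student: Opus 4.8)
The plan is to treat $g$ not as an explicit map but as one defined implicitly by the two relations $\bar x = G(x,\bar y)$ and $y = G(\bar y, x)$, to regard $\bar x$ and $\bar y$ as functions of the old coordinates $(x,y)$, and then to obtain the four entries of the Jacobian matrix by implicit differentiation. The point that forces this indirect route — rather than simply reading off partial derivatives — is that the new variable $\bar y$ occurs on the right-hand side of \emph{both} equations, so it has to be differentiated out before $J$ can be assembled.

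First I would exploit the second relation $y = G(\bar y, x)$, which implicitly determines $\bar y$ as a function of $(x,y)$. Differentiating it with respect to $x$ and to $y$ (treating the first argument $\bar y$ via the chain rule and the second argument $x$ directly) and solving for the derivatives of $\bar y$, I expect to get
\[
\bar y_x = -\frac{G_y(\bar y, x)}{G_x(\bar y, x)}, \qquad \bar y_y = \frac{1}{G_x(\bar y, x)} .
\]
Here the factor $G_x(\bar y, x)$ in the denominator arises from differentiating the first slot of $G(\bar y,x)$, and $G_y(\bar y,x)$ from its second slot. This step already isolates the denominator $G_x(\bar y,x)$ appearing in the claimed formula \eqref{eq:JRevMap}.

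Next I would differentiate the first relation $\bar x = G(x, \bar y)$ with respect to $x$ and $y$, substituting the expressions for $\bar y_x$ and $\bar y_y$ just found:
\[
\bar x_x = G_x(x,\bar y) + G_y(x,\bar y)\,\bar y_x, \qquad \bar x_y = G_y(x,\bar y)\,\bar y_y .
\]
With all four entries available, I would form the determinant $J = \bar x_x\,\bar y_y - \bar x_y\,\bar y_x$.

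The decisive step is the simplification of this determinant. Upon substituting everything, the two contributions carrying the product $G_y(x,\bar y)\,G_y(\bar y,x)$ — one coming from $\bar x_x\,\bar y_y$ and the other from $\bar x_y\,\bar y_x$ — appear with opposite signs and cancel exactly, leaving precisely $J = G_x(x,\bar y)/G_x(\bar y,x)$. I expect this cancellation of the ``cross'' terms (those involving $G_y$) to be the only genuinely nontrivial part of the argument; everything preceding it is routine chain-rule bookkeeping. The computation is valid wherever $G_x(\bar y,x)\neq 0$, which is exactly the nondegeneracy condition making the implicit definition of $\bar y$ legitimate in the first place.
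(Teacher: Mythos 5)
Your proposal is correct and follows essentially the same route as the paper's proof: implicit differentiation of both defining relations, solving $y = G(\bar y,x)$ for $\partial\bar y/\partial x$ and $\partial\bar y/\partial y$, substituting into the derivatives of $\bar x = G(x,\bar y)$, and observing the exact cancellation of the $G_y(x,\bar y)\,G_y(\bar y,x)$ cross terms in the determinant. The only cosmetic difference is that the paper writes out the four fully substituted partial derivatives before forming $J$, whereas you note the cancellation symbolically; the nondegeneracy remark $G_x(\bar y,x)\neq 0$ that you add is a sensible (implicit in the paper) precision.
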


\begin{proof}
It  follows from (\ref{eq:RevMap}) that
$$
\frac{\partial\bar x}{\partial x} = G_x(x,\bar y) + G_y(x,\bar y)\frac{\partial\bar y}{\partial x}, \;\;\;
\frac{\partial\bar x}{\partial y} = G_y(x,\bar y) \frac{\partial\bar y}{\partial y},
$$
$$
 0 = G_x(\bar y,x) \frac{\partial\bar y}{\partial x} + G_y(\bar y,x), \;\;\; 1 =  G_x(\bar y,x) \frac{\partial\bar y}{\partial y}
$$
Then we get
$$
\frac{\partial\bar y}{\partial y}  = \frac{1}{G_x(\bar y,x)}, \; \frac{\partial\bar y}{\partial x}  = - \frac{G_y(\bar y,x)}{G_x(\bar y,x)},\;
 \frac{\partial\bar x}{\partial x}  = G_x(x,\bar y) - \frac{G_y(x,\bar y)G_y(\bar y,x)}{G_x(\bar y,x)}, \; \frac{\partial\bar x}{\partial y}  = \frac{G_y(x,\bar y)}{G_x(\bar y,x)},
$$
and, as a result, we deduce formula (\ref{eq:JRevMap}) for the Jacobian
$
J = \partial\bar x /\partial x \cdot\partial\bar y/\partial y \; - \;\partial\bar x/\partial y \cdot \partial\bar y/\partial x .
$
\end{proof} 

Therefore, once having a conservative map written in the implicit form~(\ref{eq:RevMap}), we can simply add a perturbation in such a way that the cross-form is preserved, and the perturbed system will be reversible.

\subsection{Cross-form perturbation of~(\ref{eq:HenonMap})} \label{sec:crossformH1}

The idea to write a perturbation for the map~$H$, given in~~(\ref{eq:HenonMap}), comes from the formal solution of the second equation of~(\ref{eq:HenonMap}) for~$y$:
$
y=F^{-1}(x+\bar y)
$. Then the map~$H$ is rewritten in the cross-form
$$
H: \;
\bar x = F^{-1}(x+\bar y),\;\;\;
y = F^{-1}(\bar y+x).
$$
Thus, the perturbation of the form
$$
\tilde H: \;
\bar x = F^{-1}(x+\bar y) + \varepsilon(x,\bar y),\;\;\;
y = F^{-1}(\bar y+x) + \varepsilon(\bar y, x)
$$
is formally reversible. For this map we obtain from the second equation that $F^{-1}(\bar y+x) = y - \varepsilon(\bar y, x)$ and $\bar y + x = F(y- \varepsilon(\bar y, x))$. Then  map~$\tilde H$ takes the following form
\begin{equation}
\tilde H: \;
\bar x = y + \varepsilon(x, \bar y) - \varepsilon(\bar y, x),\;\;\;
\bar y = - x + F(y-\varepsilon(\bar y, x)).
\label{eq:HenonMapPert}
\end{equation}
By construction, map (\ref{eq:HenonMapPert}) should be reversible, however, the operator $F^{-1}$ is only formal, therefore the reversibility of $\tilde H$ must be proved directly. This is done in the following lemma.

\begin{lemma}
		The map $\tilde H$, defined in~(\ref{eq:HenonMapPert}), is reversible with respect to the involution $h: (x,y)\to (y,x)$.	
\end{lemma}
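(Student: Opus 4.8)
The plan is to establish the reversibility identity $\tilde H = h\circ\tilde H^{-1}\circ h$ directly from the defining equations of $\tilde H$, i.e.\ from~(\ref{eq:HenonMapPert}), using the same bookkeeping already employed in the excerpt for the cross-form map~(\ref{eq:RevMap}). The key observation I would start from is that, for a diffeomorphism written as a relation between the input $(x,y)$ and the output $(\bar x,\bar y)$, the condition $\tilde H=h\circ\tilde H^{-1}\circ h$ is equivalent to the invariance of that relation under the simultaneous interchange $x\leftrightarrow y$ and $\bar x\leftrightarrow \bar y$. Indeed, $(\bar x,\bar y)=\tilde H(x,y)$ holds if and only if $(y,x)=\tilde H(\bar y,\bar x)$, and the latter is exactly what one obtains by performing these interchanges in~(\ref{eq:HenonMapPert}). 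In particular I would \emph{not} attempt to write $\tilde H^{-1}$ in closed form; it suffices to apply the interchange to the two scalar equations of~(\ref{eq:HenonMapPert}) and to check that the transformed system is equivalent to the original.

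Carrying this out, I would treat the two equations in turn. Applying the interchange to the first equation $\bar x=y+\varepsilon(x,\bar y)-\varepsilon(\bar y,x)$ returns, after rearranging, the very same equation; this is where the antisymmetric combination $\varepsilon(x,\bar y)-\varepsilon(\bar y,x)$ is essential, since it is precisely the part of the first component that is odd under the swap. Applying the interchange to the second equation $\bar y=-x+F\bigl(y-\varepsilon(\bar y,x)\bigr)$ yields $x+\bar y=F\bigl(\bar x-\varepsilon(x,\bar y)\bigr)$, which coincides with the original relation $x+\bar y=F\bigl(y-\varepsilon(\bar y,x)\bigr)$ as soon as $\bar x-\varepsilon(x,\bar y)=y-\varepsilon(\bar y,x)$. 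The last equality is nothing but the first equation of~(\ref{eq:HenonMapPert}). Hence the two transformed equations reproduce the original system, and the desired invariance follows.

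The step I expect to be the main obstacle is precisely this coupling: the second equation is \emph{not} invariant on its own, and the argument of $F$ only matches after the first equation has been substituted in. Correspondingly, the inverse $\tilde H^{-1}$ cannot be exhibited explicitly, because the relation $\bar y=-x+F\bigl(y-\varepsilon(\bar y,x)\bigr)$ contains $x$ implicitly inside $\varepsilon(\bar y,x)$; this is exactly why the lemma cannot be deduced from a formula for $\tilde H^{-1}$ and must be verified at the level of the defining relations. As a consistency check one may also note that $\tilde H$ is literally of the cross-form~(\ref{eq:RevMap}) with $G(a,b)=F^{-1}(a+b)+\varepsilon(a,b)$, so that reversibility would follow at once from the remark accompanying~(\ref{eq:RevMap}); however, since $F^{-1}$ is only a formal operator here, the direct verification above is what actually makes the statement rigorous, and it uses nothing beyond the two explicit equations~(\ref{eq:HenonMapPert}).
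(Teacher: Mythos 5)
Your verification is correct in substance, and it is essentially the paper's own argument: the paper also works entirely at the level of the implicit relations, first writing $\tilde H^{-1}$ (still implicitly, by swapping barred and unbarred variables in (\ref{eq:HenonMapPert})) and then conjugating by $h$ through the interchange $x\leftrightarrow y$, $\bar x\leftrightarrow\bar y$; you simply perform the composition of these two substitutions in a single step, without giving the intermediate relation a name. One detail is actually handled more carefully in your write-up than in the paper's: the paper asserts that the system obtained after the interchanges ``coincides with'' (\ref{eq:HenonMapPert}), which is true only after the argument of $F$ is rewritten using the first equation, $\bar x-\varepsilon(x,\bar y)=y-\varepsilon(\bar y,x)$; you make this coupling step explicit.

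There is, however, one concrete error you must fix. The interchange you announce --- ``$x\leftrightarrow y$ and $\bar x\leftrightarrow\bar y$'' --- is not the one that characterizes reversibility. Invariance of the defining relation under that swap is equivalent to the equivariance $h\circ\tilde H=\tilde H\circ h$, which $\tilde H$ does not satisfy (equivariance together with reversibility would force $\tilde H^2=\mathrm{id}$), and indeed the first equation of (\ref{eq:HenonMapPert}) is visibly not invariant under it: it produces terms $\varepsilon(y,\bar x)$ and $\varepsilon(\bar x,y)$ that occur nowhere in the system. The swap that does correspond to your (correct) equivalence ``$(\bar x,\bar y)=\tilde H(x,y)$ iff $(y,x)=\tilde H(\bar y,\bar x)$'' is $x\leftrightarrow\bar y$, $y\leftrightarrow\bar x$, i.e.\ the composition of the bar/no-bar swap (which produces the inverse) with the $h$-swap, which is exactly how the paper organizes the two steps. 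Your computations in the second paragraph are precisely the ones for this correct swap --- under it the first equation is invariant and the second becomes $x+\bar y=F\bigl(\bar x-\varepsilon(x,\bar y)\bigr)$ --- so nothing in the body of your argument changes; only the stated ``key observation'' needs to be rewritten, since as literally phrased it is false and the verification could not proceed from it.
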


\begin{proof} To prove the reversibility of $\tilde{H}$, we have to show that $\tilde{H}=h\circ \tilde{H}^{-1}\circ h$.
	
The inverse map $\tilde{H}^{-1}$ is obtained after swapping the bar and no-bar variables $\bar x \leftrightarrow x, \bar y \leftrightarrow y$, i.e.,
\begin{equation}\label{eq:tldHm1}
	\tilde{H}^{-1}\;: \;\;\;  \bar x = - y + F(\bar y-\varepsilon(y, \bar x)), \;\;\; \bar y =   x - \varepsilon(\bar x, y) + \varepsilon(y, \bar x).
\end{equation}
After exchanging $x \leftrightarrow y$ and $\bar x \leftrightarrow \bar y$ in (\ref{eq:tldHm1}),  due to the involution $h$, we get the expression for $h\circ \tilde{H}^{-1}\circ h$ which coincides with~(\ref{eq:HenonMapPert}).
\end{proof}

\begin{lemma}\label{lm:J_Htld}
The Jacobian of the map~(\ref{eq:HenonMapPert}) takes the following formula
   \begin{equation}\label{eq:J_Htld}
   J=\frac{1+ F'(y-\varepsilon(\bar y,x)) \varepsilon_x(x,\bar y)}{1+F'(y-\varepsilon(\bar y, x)) \varepsilon_x(\bar y, x)}.
   \end{equation}
\end{lemma}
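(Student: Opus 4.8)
The plan is to avoid differentiating (\ref{eq:HenonMapPert}) by brute force and instead recognize that $\tilde H$ is already of the cross-form type (\ref{eq:RevMap}), so that Lemma~\ref{lm:crosjac} applies directly. Reading off the intermediate (formal) representation from which $\tilde H$ was derived, namely $\bar x = F^{-1}(x+\bar y)+\varepsilon(x,\bar y)$ and $y = F^{-1}(\bar y+x)+\varepsilon(\bar y,x)$, one sees that $\tilde H$ has exactly the shape (\ref{eq:RevMap}) with $G(\xi,\eta)=F^{-1}(\xi+\eta)+\varepsilon(\xi,\eta)$. Hence formula (\ref{eq:JRevMap}) applies verbatim, giving $J=G_x(x,\bar y)/G_x(\bar y,x)$, and the whole problem reduces to computing the first-slot derivative $G_x$ of this particular $G$ and then expressing it without the formal symbol $F^{-1}$.

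The second step is the computation of $G_x$. By the chain rule $G_x(\xi,\eta)=(F^{-1})'(\xi+\eta)+\varepsilon_x(\xi,\eta)$, and since $(F^{-1})'(s)=1/F'(F^{-1}(s))$ we get $G_x(\xi,\eta)=1/F'(F^{-1}(\xi+\eta))+\varepsilon_x(\xi,\eta)$. The key simplification is that the map itself eliminates $F^{-1}$: the second line of the formal representation reads $F^{-1}(\bar y+x)=y-\varepsilon(\bar y,x)$, so $F'(F^{-1}(x+\bar y))=F'(y-\varepsilon(\bar y,x))$, and, crucially, the same value occurs whether we evaluate at $(x,\bar y)$ or at $(\bar y,x)$, because $F^{-1}$ depends only on the sum $x+\bar y$. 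Writing $P:=F'(y-\varepsilon(\bar y,x))$, this yields $G_x(x,\bar y)=(1+P\varepsilon_x(x,\bar y))/P$ and $G_x(\bar y,x)=(1+P\varepsilon_x(\bar y,x))/P$; the common factor $1/P$ cancels in the ratio, producing precisely (\ref{eq:J_Htld}).

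I expect the main obstacle to be the bookkeeping around the formal inverse $F^{-1}$: since $F$ need not be globally invertible, one must argue that the chain-rule step is locally legitimate (wherever $F'\neq0$) and, above all, that the argument of $F'$ is correctly replaced by $y-\varepsilon(\bar y,x)$ via the map rather than left as an undefined quantity. To make the proof self-contained and sidestep $F^{-1}$ entirely, I would also sketch the direct verification of (\ref{eq:J_Htld}) by implicit differentiation of (\ref{eq:HenonMapPert}). The system is effectively triangular: equation (\ref{eq:HenonMapPert}) for $\bar y$ contains $\bar y$ only on its own right-hand side, so it alone determines $\partial\bar y/\partial x=-(1+P\,\varepsilon_y(\bar y,x))/(1+P\,\varepsilon_x(\bar y,x))$ and $\partial\bar y/\partial y=P/(1+P\,\varepsilon_x(\bar y,x))$, after which the equation for $\bar x$ gives $\partial\bar x/\partial x$ and $\partial\bar x/\partial y$ in terms of these. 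On assembling $J=\bar x_x\bar y_y-\bar x_y\bar y_x$, the useful accident is that all contributions proportional to $\varepsilon_y(x,\bar y)-\varepsilon_x(\bar y,x)$ cancel identically, collapsing the determinant to a single ratio that again equals (\ref{eq:J_Htld}). This route is longer but never invokes $F^{-1}$, so it serves as a rigorous backstop for the cross-form derivation.
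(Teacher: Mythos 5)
Your proposal is correct, and your primary route differs genuinely from the paper's. The paper proves Lemma~\ref{lm:J_Htld} by brute-force implicit differentiation of~(\ref{eq:HenonMapPert}): it computes $\partial\bar x/\partial x$, $\partial\bar x/\partial y$ from the first equation, observes the cancellation that collapses the determinant to $J=\left(\varepsilon_x(x,\bar y)-\varepsilon_y(\bar y,x)\right)\bar y_y-\bar y_x$, then extracts $\bar y_x,\bar y_y$ from the second equation --- exactly your ``backstop'' computation, including the two partials you quote. Your primary route instead applies Lemma~\ref{lm:crosjac} to $G(\xi,\eta)=F^{-1}(\xi+\eta)+\varepsilon(\xi,\eta)$, exploits the fact that $(F^{-1})'$ depends only on the symmetric sum $x+\bar y$ so that it cancels in the ratio $G_x(x,\bar y)/G_x(\bar y,x)$, and eliminates $F^{-1}$ via the map equation $F^{-1}(x+\bar y)=y-\varepsilon(\bar y,x)$. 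This is slicker and explains structurally why $J$ has the ratio form; it is also the same mechanism the paper itself uses later for $\tilde H^{-2}$ and $\tilde H_{12}^{-1}$, where the cross-form is genuine. What it does not buy is unconditional validity: the paper explicitly treats $F^{-1}$ as a formal symbol (this is why it proves the reversibility of $\tilde H$ directly rather than citing the construction), and your cross-form argument is legitimate only where $F'(y-\varepsilon(\bar y,x))\neq 0$, since at zeros of $F'$ no local inverse of $F$ exists and $G$ is undefined. At such points both sides of~(\ref{eq:J_Htld}) equal $1$, so the formula extends by continuity off the (generically codimension-one) set $\{F'=0\}$, but you should state that extension explicitly rather than leave it implicit; alternatively, promoting your backstop to the actual proof --- which is what the paper does --- removes the issue entirely.
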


\begin{proof}
Differentiating the first equation of~(\ref{eq:HenonMapPert}) with respect to $x$ and $y$ we get
$$
\frac{\partial \bar x}{\partial x} = \varepsilon_x(x,\bar y) +\varepsilon_y(x, \bar y) \frac{\partial \bar y}{\partial x} - \varepsilon_x (\bar y, x) \frac{\partial \bar y}{\partial x} - \varepsilon_y (\bar y, x), \;\;\; \frac{\partial \bar x}{\partial y} = 1 + \varepsilon_y(x,\bar y) \frac{\partial \bar y}{\partial y} - \varepsilon_x(\bar y, x) \frac{\partial \bar y}{\partial y}.
$$
Therefore, we have
\begin{equation}\label{eq:Jl3}
J= \frac{\partial \bar x}{\partial x} \frac{\partial \bar y}{\partial y} -\frac{\partial \bar x}{\partial y} \frac{\partial \bar y}{\partial x} = \left( \varepsilon_x(x,\bar y) - \varepsilon_y(\bar y, x)\right) \frac{\partial \bar y}{\partial y} - \frac{\partial \bar y}{\partial x}.
\end{equation}
We find the derivatives ${\partial \bar y}/{\partial x}$ and ${\partial \bar y}/{\partial y}$ from the second equation of~(\ref{eq:HenonMapPert}) by its implicit differentiation
$$
\frac{\partial \bar y}{\partial x} = \frac{-1-F'(y-\varepsilon(\bar y, x)) \varepsilon_y(\bar y, x)}{1+F'(y-\varepsilon(\bar y, x)) \varepsilon_x(\bar y, x)}, \;\;\;
\frac{\partial \bar y}{\partial y} = \frac{F'(y-\varepsilon(\bar y,x))}{1+F'(y-\varepsilon(\bar y, x)) \varepsilon_x(\bar y, x)}.
$$
After substituting these into~(\ref{eq:Jl3}), we obtain~(\ref{eq:J_Htld}).
\end{proof}

It is worth mentioning that if the perturbation $\varepsilon(x,y)$ in~(\ref{eq:HenonMapPert}) is a symmetric function, i.e. $\varepsilon(x,y)=\varepsilon(y,x)$, the perturbed map~(\ref{eq:HenonMapPert}) takes the simpler form
\begin{equation}\label{eq:HMPertSymm}
\tilde H: \;
\bar x = y,\;\;\;
\bar y = - x + F(y-\varepsilon(x, \bar y)),
\end{equation}
and the same formula~(\ref{eq:J_Htld}) holds for the Jacobian.

Note that the perturbed systems~(\ref{eq:HenonMapPert}) and (\ref{eq:HMPertSymm}) contain perturbing terms inside a nonlinear function $F$, and, hence, it is hard to iterate the maps --
one needs to solve the second equations for $\bar y$ and calculate $\bar y = f(x,y)$. In the following subsections we show that using cross-form~(\ref{eq:RevMap}) it is possible to construct reversibility preserving perturbations of another kind which allows to iterate the maps directly. We also show that such perturbations can be constructed by the Quispel-Roberts method, see Section~\ref{sec:QR}.

\subsection{Perturbations of $H^{-2}$}  \label{sec:crossformH-2}

The cross-form reversible perturbations can be easily constructed for the map $H^{-2}$ that is the square of $H^{-1}$, i.e., the inverse map to the conservative H\'enon-like map $H$.
We obtain from (\ref{eq:HenonMap}) that map $H^{-1}$ takes the form
$$
  H^{-1}: \;\;\; \bar x = -y + F(x), \;\;\; \bar y =x.
$$
Then the map $H^{-2}$ is written as
\begin{equation}\label{eq:Hm-2}
  H^{-2}=H^{-1}\circ H^{-1}: \;\;\; \bar x = -x + F(-y + F(x)), \;\;\; \bar y = - y + F(x),
\end{equation}

\begin{lemma}
  The map of the form
  \begin{equation}\label{eq:tldHm2}
  \tilde{H}^{-2}: \;\;\; \bar x = -x + F(\bar y) + \varepsilon(x,\bar y), \;\;\; \bar y= -y + F(x) + \varepsilon(\bar y, x)
  \end{equation}
  is reversible  with respect to the involution $h:(x,y)\to(y,x)$. The Jacobian of $\tilde{H}^{-2}$ is
  $$
  J=\frac{1-\varepsilon_x(x,\bar y)}{1-\varepsilon_x(\bar y, x)}.
  $$
\end{lemma}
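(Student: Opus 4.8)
The plan is to recognize that the map $\tilde H^{-2}$ in~(\ref{eq:tldHm2}) is already an instance of the cross-form~(\ref{eq:RevMap}), so that both assertions follow at once from the general facts established for that form. To see this, I would first solve the second equation of~(\ref{eq:tldHm2}) for the old variable $y$, obtaining $y = -\bar y + F(x) + \varepsilon(\bar y, x)$. Setting $G(a,b) := -a + F(b) + \varepsilon(a,b)$, the first equation reads $\bar x = -x + F(\bar y) + \varepsilon(x,\bar y) = G(x,\bar y)$, while the rewritten second equation reads $y = -\bar y + F(x) + \varepsilon(\bar y, x) = G(\bar y, x)$. Thus $\tilde H^{-2}$ coincides with~(\ref{eq:RevMap}) for this particular choice of $G$.

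Once the cross-form is exhibited, reversibility with respect to $h:(x,y)\to(y,x)$ is immediate from the observation following~(\ref{eq:RevMap}), which asserts that every map of the form~(\ref{eq:RevMap}) satisfies $g = h\circ g^{-1}\circ h$. If a self-contained argument is preferred, one can instead obtain the inverse of $\tilde H^{-2}$ by interchanging the barred and unbarred variables, apply $h$ on both sides, and check directly that the result reproduces~(\ref{eq:tldHm2}); this is the same type of computation as in the proof for $\tilde H$, but now with no formal operator $F^{-1}$ involved, so that no additional justification of the reversibility is needed.

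For the Jacobian I would simply invoke Lemma~\ref{lm:crosjac}: formula~(\ref{eq:JRevMap}) gives $J = G_x(x,\bar y)/G_x(\bar y, x)$, where $G_x$ denotes the derivative in the first slot. Since $G(a,b) = -a + F(b) + \varepsilon(a,b)$ depends on its first argument only through the terms $-a$ and $\varepsilon$, we have $G_x(a,b) = -1 + \varepsilon_x(a,b)$. Substituting $(a,b) = (x,\bar y)$ in the numerator and $(a,b) = (\bar y, x)$ in the denominator yields
$$
J = \frac{-1 + \varepsilon_x(x,\bar y)}{-1 + \varepsilon_x(\bar y, x)} = \frac{1 - \varepsilon_x(x,\bar y)}{1 - \varepsilon_x(\bar y, x)},
$$
which is exactly the claimed expression.

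There is essentially no hard step here; the only point requiring care is matching the order of the arguments of $G$ (and hence of $\varepsilon$) between the two equations, since it is precisely the crossing $(x,\bar y)\leftrightarrow(\bar y,x)$ that produces both the reversibility and the reciprocal structure of the Jacobian. Everything else is a direct specialization of Lemma~\ref{lm:crosjac} together with the cross-form reversibility property, so the bulk of the work is the bookkeeping of rewriting~(\ref{eq:tldHm2}) in the form~(\ref{eq:RevMap}).
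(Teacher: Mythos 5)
Your proposal is correct and follows essentially the same route as the paper's own proof: both identify $\tilde H^{-2}$ with the cross-form~(\ref{eq:RevMap}) for $G(x,y) = -x + F(y) + \varepsilon(x,y)$, deduce reversibility from the general property of cross-form maps, and read off the Jacobian from Lemma~\ref{lm:crosjac}. The only (immaterial) difference is direction of presentation: the paper first writes the unperturbed $H^{-2}$ in cross-form and then perturbs $G$, whereas you verify directly that the given perturbed map is already of cross-form type.
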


\begin{proof} Map (\ref{eq:Hm-2}) can be presented in the cross-form as follows
  $$
  H^{-2}: \;\;\; \bar x = -x + F(\bar y), \;\;\; y = - \bar y + F(x),
  $$
that have form (\ref{eq:RevMap}) with $G(x,y) = - x + F(y)$. Thus, the perturbation
$\bar x = -x + F(\bar y) + \varepsilon(x,\bar y), \;\; y = - \bar y + F(x) + \varepsilon(\bar x,y)$ is  what we need, and it takes the form~(\ref{eq:tldHm2}). The desired formula for the Jacobian $J(\tilde H^{-2})$ is obtained from (\ref{eq:JRevMap}) for $G(x,y) = - x + F(y) + \varepsilon(x,y)$.
\end{proof}

The form of the map~(\ref{eq:tldHm2}) allows to write the map explicitly for some perturbations. For example, if $\varepsilon(x,y)$ is linear in $x$, i.e. $\varepsilon(x,y)=\alpha(y)+x \beta(y)$, then the map yields
$$
\tilde{H}^{-2}: \;\;\; \bar x = -x + F(\bar y) + \alpha(\bar y) + x \beta(\bar y), \;\;\; \bar y= \frac{-y + F(x) + \alpha(x) }{1- \beta(x)}
$$
and its Jacobian is
$$
J= \frac{1- \beta(\bar y)}{ 1- \beta(x)}.
$$
Hence, the new map is a diffeomorphism in some ball $\{x\in \mathbb{R} : \|(x,y)\|\leq R_\beta\}$, where $R_\beta\to\infty$ as $|\beta|\to 0$.
Besides, for some special functions $\beta(x)$ (for instance, $\beta(x)=\mu \arctan(x)$ with sufficiently small $\mu$), the map is an analytical diffeomorphism in the whole plane $\mathbb{R}^2$.

\subsection{Perturbations of conservative compositions of two non-conservative H\'enon-ilke maps} \label{sec:crossform2H}

The next approach is connected with perturbations of the product of  two asymmetric non-conservative H\'enon-like maps $H_1$ and $H_2$ of the form
\begin{equation}
H_1: \;\;\; \bar x = y, \;\;\; \bar y = b x + F(y)\;\; \mbox{{\rm and}}\;\;  H_2: \;\;\; \bar x = y, \;\;\; \bar y = \frac{1}{b} x - \frac{1}{b} F(y).
\label{eq:Hlms}
\end{equation}
These maps have the Jacobians $b$ and $1/b$, respectively, and their nonlinearities are asymmetric.
Their inverse maps are
$$
H_1^{-1}: \;\;\; \bar x = \frac{1}{b} y - \frac{1}{b} F(x), \;\;\; \bar y = x \;\; \mbox{{\rm and}}\;\; H_2^{-1}: \;\;\; \bar x = b y + F(x), \;\;\; \bar y =  x,
$$
respectively. The composition $ H_{12}^{-1} = H_1^{-1}\circ H_2^{-1}$ of the last two maps can be written as
$$
H_{12}^{-1}: \;\;\; \bar x = \frac{1}{b} x - \frac{1}{b} F\left( b y +  F(x)\right), \;\; \bar y = b y +  F(x),
$$
or, in the cross-form, as
\begin{equation}
H_{12}^{-1}: \;\;\; \bar x = \frac{1}{b} x - \frac{1}{b} F\left(\bar y \right), \;\; y = \frac{1}{b} \bar y  - \frac{1}{b}  F(x).
\label{eq:H12-1}
\end{equation}
Thus, the composition $H_1^{-1}\circ H_2^{-1}$ has the cross-form ~(\ref{eq:RevMap}) with $G(x,y)= b^{-1}(x - F(y))$. This implies the following result

\begin{lemma}\label{lm:H12tld}
The map
\begin{equation}
\tilde{H}_{12}^{-1}: \;\;\; \bar x = \frac{1}{b} x  - \frac{1}{b} F(\bar y) + \varepsilon(x,\bar y), \;\;\;  y = \frac{1}{b} \bar y - \frac{1}{b} F(x) + \varepsilon(\bar y, x)
\label{eq:H12-1e}
\end{equation}
is a reversible perturbation of $H_1^{-1}\circ H_2^{-1}$ that keeps the involution $h:(x,y)\to (y,x)$, and
\begin{equation} \displaystyle
J\left(\tilde{H}_{12}^{-1}\right)=\frac{1+b\varepsilon_x(x,\bar y)}{1+b\varepsilon_x(\bar y, x)}.
\label{eq:J(H12)}
\end{equation}

\end{lemma}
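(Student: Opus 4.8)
The plan is to recognize $\tilde H_{12}^{-1}$ as an instance of the cross-form~(\ref{eq:RevMap}) and then harvest both conclusions from the general results already established for that form. The unperturbed composition~(\ref{eq:H12-1}) already sits in cross-form with $G(x,y) = b^{-1}(x - F(y))$; the perturbation in~(\ref{eq:H12-1e}) simply adds $\varepsilon$ to this generator, so I would take
$$
G(x,y) = \frac{1}{b}\,x - \frac{1}{b}\,F(y) + \varepsilon(x,y)
$$
and verify that the two equations of~(\ref{eq:H12-1e}) read exactly $\bar x = G(x,\bar y)$ and $y = G(\bar y, x)$. The first is immediate. For the second I must keep the slots of $\varepsilon$ in the right order: $G(\bar y, x) = b^{-1}\bar y - b^{-1}F(x) + \varepsilon(\bar y, x)$, which is precisely the right-hand side of the second equation of~(\ref{eq:H12-1e}). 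This argument-ordering bookkeeping is the only place demanding care, since $\varepsilon$ is not assumed symmetric and the two slots must be tracked; it is the sole, and mild, obstacle in the proof.

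Granting this identification, reversibility with respect to $h:(x,y)\to(y,x)$ is essentially free. The general computation given immediately after~(\ref{eq:RevMap}) shows that every map of cross-form~(\ref{eq:RevMap}) satisfies $g = h\circ g^{-1}\circ h$, and that derivation uses nothing about $G$ beyond its being a genuine smooth function of two variables. Here $G$ is an honest function of $(x,y)$ --- unlike the situation in~(\ref{eq:HenonMapPert}), where $F^{-1}$ was only formal and forced a direct verification of the inverse --- so the general argument applies verbatim and no separate check is required.

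For the Jacobian I would invoke Lemma~\ref{lm:crosjac}, which gives $J = G_x(x,\bar y)/G_x(\bar y, x)$ for any cross-form map. Differentiating $G$ with respect to its first slot yields $G_x(x,y) = b^{-1} + \varepsilon_x(x,y)$, whence
$$
J\bigl(\tilde H_{12}^{-1}\bigr) = \frac{b^{-1} + \varepsilon_x(x,\bar y)}{b^{-1} + \varepsilon_x(\bar y, x)}.
$$
Clearing the common factor $b^{-1}$ from numerator and denominator gives exactly~(\ref{eq:J(H12)}). Thus both assertions of the lemma reduce to the prior results on the cross-form, the only genuine work being the correct matching of $G$ to~(\ref{eq:H12-1e}).
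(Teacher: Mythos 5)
Your proposal is correct and follows exactly the route the paper takes: the paper states Lemma~\ref{lm:H12tld} as an immediate consequence ("This implies the following result") of identifying~(\ref{eq:H12-1e}) as a cross-form map~(\ref{eq:RevMap}) with $G(x,y)=b^{-1}x-b^{-1}F(y)+\varepsilon(x,y)$, so that reversibility comes from the general cross-form argument and the Jacobian from Lemma~\ref{lm:crosjac}, just as you do. Your slot-ordering check and the cancellation of $b^{-1}$ in the Jacobian quotient are exactly the (implicit) computations the paper relies on.
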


\section{Quispel-Roberts method for construction of reversible perturbations.} \label{sec:QR}

The basic elements of the theory of reversible systems were developed in the famous paper \cite{RQ92} by Quispel and Roberts. In particular, in this paper general methods for the construction of reversible perturbations of reversible maps were proposed. One of such methods is based on the following two facts:
\begin{itemize}
\item[1)]  Any reversible map can be represented as a composition of its two involutions.
\item[2)]  If $g$ is an involution, then $\tilde g = T^{-1}\circ g \circ T$ is also involution, if the map $T$ is a diffeomorphism.
\end{itemize}

Indeed, for item 1), if $g$ is an involution of a map $f$,
we have $f = g\circ f^{-1}\circ g = g\circ \left(f^{-1}\circ g\right)$ and the map  $f^{-1}\circ g$ is also involution, since
$$
\left(f^{-1}\circ g\right)^2 = f^{-1}\circ g \circ f^{-1}\circ g = f^{-1}\circ \left( g \circ f^{-1}\circ g \right) =  f^{-1}\circ f = \;\mbox{{\rm id}}.
$$
For item 2), we obtain
$\tilde g^2 = T^{-1}\circ g \circ \left( T \circ T^{-1} \right) \circ g \circ T = T^{-1}\circ \left( g\circ g \right) \circ T = T^{-1}\circ T = \;\mbox{{\rm id}}. $

The conservative H\'enon-like map $H$, given by (\ref{eq:HenonMap}), can be also presented as the product $H = h_1\circ h_2$ of two involutions:
\begin{equation}
h_1 = h = \left\{\begin{array}{l} \bar x = y, \\ \bar y = x \end{array}\right. \;\;\mbox{{\rm and}}\;\;
h_2 = \left\{\begin{array}{l} \bar x = - x + F(y), \\ \bar y = y \end{array}\right.
\label{eq:2invH}
\end{equation}

Thus, we can construct reversible perturbations of $H$ by means of changing their involutions. For our goals,
we keep the involution $h_1 = h$ and take new involution $\tilde h_2$  as the perturbation $\tilde h_2 = T^{-1}\circ h_2 \circ T$ of the involution $h_2$ by means of a map
$T$ that is close to the identity map $\bar x = x, \bar y = y$. The following lemma summarizes results of the corresponding calculations.

\begin{lemma}\label{lm:til_H}
The map
\begin{equation}
\hat H : \left\{
\begin{array}{l}
\bar x = y + \varepsilon_2(x,y) - \varepsilon_2(\bar y,\bar x), \\
\bar y = -x  + F \left(y + \varepsilon_2(x,y)\right) - \varepsilon_1(x,y) -  \varepsilon_1(\bar y,\bar x)
\end{array}
\right.
\label{eq:tildeH}
\end{equation}
is a reversible perturbation of the conservative H\'enon-like map $H$, given in (\ref{eq:HenonMap}), that is constructed in the form $\hat H = h_1\circ \tilde h_2$, where $\tilde h_2 = T^{-1}\circ h_2 \circ T$ and the map
$T: \; \bar x = x + \varepsilon_1 (x,y), \bar y = y + \varepsilon_2 (x,y)$ is assumed to a near identity map.
\end{lemma}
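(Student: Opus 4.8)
The plan is to verify the statement in two independent parts: first to establish reversibility abstractly from the Quispel--Roberts construction, and then to derive the explicit mixed-form equations (\ref{eq:tildeH}) by composing the four constituent maps directly. The reversibility part does not require the explicit formula at all. Since $h_2$ is an involution and $T$ is a diffeomorphism, item 2) recalled above guarantees that $\tilde h_2 = T^{-1}\circ h_2\circ T$ is again an involution. Hence $\hat H = h_1\circ\tilde h_2$ is a composition of two involutions with $h_1=h$ as the first factor, and the computation of item 1) applies verbatim: one has $\hat H^{-1} = \tilde h_2\circ h_1$, so that $h_1\circ\hat H^{-1}\circ h_1 = h_1\circ\tilde h_2\circ h_1\circ h_1 = h_1\circ\tilde h_2 = \hat H$, which is exactly the reversibility of $\hat H$ with respect to the involution $h=h_1$.

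For the explicit formula I would trace an arbitrary point $(x,y)$ through $\hat H = h_1\circ T^{-1}\circ h_2\circ T$ step by step. Applying $T$ gives $(x_1,y_1)=(x+\varepsilon_1(x,y),\,y+\varepsilon_2(x,y))$; applying $h_2$ gives $(x_2,y_2)=(-x_1+F(y_1),\,y_1)$, that is, $x_2 = -x-\varepsilon_1(x,y)+F(y+\varepsilon_2(x,y))$ and $y_2 = y+\varepsilon_2(x,y)$; then $T^{-1}$ produces some intermediate point $(x_3,y_3)$; and finally $h_1$ yields $(\bar x,\bar y)=(y_3,x_3)$. The only delicate step is $T^{-1}$, which is defined only implicitly.

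The key idea is to refrain from inverting $T$ explicitly. From $(x_3,y_3)=T^{-1}(x_2,y_2)$ one has equivalently $T(x_3,y_3)=(x_2,y_2)$, i.e. $x_3+\varepsilon_1(x_3,y_3)=x_2$ and $y_3+\varepsilon_2(x_3,y_3)=y_2$. Since the final application of $h_1$ forces $\bar x=y_3$ and $\bar y=x_3$, the intermediate point is $(x_3,y_3)=(\bar y,\bar x)$ with the coordinates swapped, so these two defining relations become $\bar y+\varepsilon_1(\bar y,\bar x)=x_2$ and $\bar x+\varepsilon_2(\bar y,\bar x)=y_2$. Substituting the expressions for $x_2,y_2$ found above and rearranging gives precisely the two equations of (\ref{eq:tildeH}). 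As a sanity check, putting $\varepsilon_1=\varepsilon_2\equiv 0$ (so $T=\mathrm{id}$) collapses the formula to $\bar x=y$, $\bar y=-x+F(y)$, i.e. the unperturbed map (\ref{eq:HenonMap}), confirming that $\hat H$ is a genuine perturbation of $H$.

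The main obstacle is exactly this presence of $T^{-1}$: a naive attempt to write $T^{-1}$ in closed form would only yield a perturbative, first-order-in-$\varepsilon$ approximation and would obscure the clean result. What makes the computation exact is the observation that the trailing involution $h_1$ turns the unknown preimage $(x_3,y_3)$ into the bar-variables $(\bar y,\bar x)$, so that the implicit equations defining $T^{-1}$ coincide term by term with the mixed-form equations (\ref{eq:tildeH}). No expansion in $\varepsilon$ is needed, which is why the identity holds for the full near-identity diffeomorphism $T$ rather than merely to leading order.
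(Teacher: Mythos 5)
Your proposal is correct and follows essentially the same route as the paper: both handle $T^{-1}$ implicitly by writing $T(\text{preimage}) = h_2\circ T(x,y)$ and observe that the final swap by $h_1$ identifies the unknown preimage with $(\bar y,\bar x)$, yielding the mixed-form equations (\ref{eq:tildeH}) exactly. Your only addition is spelling out the reversibility argument $h_1\circ\hat H^{-1}\circ h_1 = h_1\circ\tilde h_2 = \hat H$, which the paper leaves implicit in the general facts 1) and 2) stated at the start of Section~\ref{sec:QR}.
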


\begin{proof}
We will find first the new involution $\tilde h_2 = T^{-1}\circ h_2 \circ T$.
By~(\ref{eq:2invH}), the composition  $h_2 \circ T : (x,y)\to (x^\prime,y^\prime)$ can be written as
$$
h_2 \circ T : \left\{\begin{array}{l} x^\prime = -x - \varepsilon_1(x,y) + F(y + \varepsilon_2(x,y)), \\ y^\prime = y + \varepsilon_2(x,y). \end{array}\right.
$$
We can write the map $T^{-1}: (x^\prime,y^\prime)\to (\bar x,\bar y)$ as follows
$\bar x + \varepsilon_1 (\bar x,\bar y) = x^\prime, \; \bar y + \varepsilon_2 (\bar x,\bar y)=y^\prime$.
Then for the new involution $\tilde h_2$, we get the following expression
$$
\tilde h_2 = T^{-1}\circ h_2 \circ T : \left\{\begin{array}{l} \bar x +  \varepsilon_1(\bar x, \bar y) = -x - \varepsilon_1(x,y) + F(y + \varepsilon_2(x,y)), \\
\bar y  + \varepsilon_2(\bar x, \bar y) = y + \varepsilon_2(x,y). \end{array}\right.
$$
After this, formula (\ref{eq:tildeH}) for the map $\hat H = h_1\circ \tilde h_2$ is easy obtained: we only need  to replace $\bar x \leftrightarrow \bar y$ in this expression for $\tilde h_2$ ($x$ and $y$ are not changed).
\end{proof}

\begin{lemma}\label{lm:Jtil_H}
	The Jacobian of the perturbed map $\hat H$ is
\begin{equation} \label{eq:QRJac}
J(\hat {H})=\frac {  \left( 1 +\varepsilon_{2y}(x,y)\right) \left( 1 +\varepsilon_{1x}(x,y)\right) - \varepsilon_{2x}(x,y)\varepsilon_{1y}(x,y)}
{\left(1 +\varepsilon_{2y}(\bar y,\bar x)\right) \left( 1 +\varepsilon_{1x}(\bar y,\bar x)\right) - \varepsilon_{2x}(\bar y,\bar x)\varepsilon_{1y}(\bar y,\bar x)}.
\end{equation}
\end{lemma}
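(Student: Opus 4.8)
The plan is to avoid differentiating the implicit equations of (\ref{eq:tildeH}) directly and instead exploit the composition structure $\hat H = h_1\circ \tilde h_2 = h_1\circ T^{-1}\circ h_2\circ T$ together with the multiplicativity of the Jacobian under composition. This reduces the whole computation to assembling four elementary determinants and keeping track of the points at which each one is evaluated.

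First I would record the three ``easy'' determinants. The involution $h_1=h$ is the linear coordinate swap, so $\det Dh_1\equiv -1$. For $h_2$ we have $Dh_2=\left(\begin{smallmatrix} -1 & F'(y)\\ 0 & 1\end{smallmatrix}\right)$, hence $\det Dh_2\equiv -1$ as well. The near-identity map $T$ has $\det DT=(1+\varepsilon_{1x})(1+\varepsilon_{2y})-\varepsilon_{1y}\varepsilon_{2x}$, which is exactly the numerator of (\ref{eq:QRJac}) when evaluated at $(x,y)$. Finally, by the inverse function theorem $\det DT^{-1}(r)=1/\det DT(T^{-1}(r))$.

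The crux is the bookkeeping of evaluation points. Writing $p=(x,y)$ and applying the chain rule to $\hat H = h_1\circ T^{-1}\circ h_2\circ T$ gives $\det D\hat H(p)=\det Dh_1\cdot \det DT^{-1}(h_2T(p))\cdot \det Dh_2\cdot \det DT(p)$. Since $T^{-1}(h_2T(p))=\tilde h_2(p)$, the inverse-function-theorem identity turns the second factor into $1/\det DT(\tilde h_2(p))$, and the two factors $-1$ coming from $h_1$ and $h_2$ cancel, leaving $\det D\hat H(p)=\det DT(p)/\det DT(\tilde h_2(p))$. It then remains only to identify the evaluation point $\tilde h_2(p)$: from $\hat H=h_1\circ \tilde h_2$ and $\hat H(p)=(\bar x,\bar y)$ we get $\tilde h_2(p)=h_1(\bar x,\bar y)=(\bar y,\bar x)$, because $h_1$ is the involutive coordinate swap. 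Substituting $(x,y)$ and $(\bar y,\bar x)$ into $\det DT$ yields precisely (\ref{eq:QRJac}).

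I expect the only genuine obstacle to be this tracking of arguments --- in particular, remembering that $\det DT^{-1}$ is evaluated at the image point $h_2T(p)$ and equals the reciprocal of $\det DT$ at the source point $\tilde h_2(p)$, and then recognizing that this source point is the coordinate-swapped output $(\bar y,\bar x)$. A direct alternative, in the style of the preceding lemmas, would be to differentiate the two equations of (\ref{eq:tildeH}) implicitly with respect to $x$ and $y$, solve the resulting linear systems for the four partial derivatives of $(\bar x,\bar y)$, and assemble $J$ by hand; this is correct but heavier and hides the structural reason why numerator and denominator are the same function $\det DT$ evaluated at the two $h$-symmetric points.
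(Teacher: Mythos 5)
Your proposal is correct, and it takes a genuinely different route from the paper. The paper's proof is a direct computation: it differentiates the two implicit equations of (\ref{eq:tildeH}) with respect to $x$ and $y$, solves the resulting linear system for the four partial derivatives $\partial\bar x/\partial x$, $\partial\bar x/\partial y$, $\partial\bar y/\partial x$, $\partial\bar y/\partial y$, and assembles the determinant by hand. You instead exploit the factorization $\hat H = h_1\circ T^{-1}\circ h_2\circ T$ from Lemma~\ref{lm:til_H}, multiplicativity of the Jacobian determinant under composition, constancy of $\det Dh_1\equiv\det Dh_2\equiv -1$ (note $Dh_2$ itself depends on the point through $F'(y)$, but its determinant does not), and the inverse function theorem; the only nontrivial step is your correct identification of the intermediate evaluation point $\tilde h_2(p)=h_1(\hat H(p))=(\bar y,\bar x)$, which is exactly where the denominator's arguments come from. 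Your argument buys structural insight that the paper's computation hides: it shows the Jacobian is the same function $\det DT$ evaluated at the two $h$-symmetric points $(x,y)$ and $(\bar y,\bar x)$, which explains the ratio form shared by all the Jacobian formulas in the paper ((\ref{eq:JRevMap}), (\ref{eq:J_Htld}), (\ref{eq:J(H12)}), (\ref{eq:J-eps0})), and it generalizes verbatim to any reversible map written as a product of two involutions whose determinants are constant. What it costs is a reliance on $T$ being a diffeomorphism (guaranteed here by the near-identity assumption) and on the prior structural lemma, whereas the paper's implicit differentiation is self-contained and works uniformly even when one only has the final implicit equations (\ref{eq:tildeH}) rather than the composition structure.
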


\begin{proof}
We calculate the derivatives ${\partial \bar x}/{\partial x}$, ${\partial \bar y}/{\partial x}$, ${\partial \bar x}/{\partial y}$ and ${\partial \bar y}/{\partial y}$ from (\ref{eq:tildeH}):
$$
\begin{array}{l}
\displaystyle \left(1 + \varepsilon_{2y} (\bar y, \bar x) \right) \frac{\partial \bar x}{\partial x} =  \varepsilon_{2x} (x,y)  -  \varepsilon_{2x}(\bar y,\bar x) \frac{\partial \bar y}{\partial x}, \;\;\;
\left(1 + \varepsilon_{2y} (\bar y, \bar x) \right) \frac{\partial \bar x}{\partial y} = 1 +  \varepsilon_{2y} (x,y)  -
\varepsilon_{2x}(\bar y,\bar x) \frac{\partial \bar y}{\partial y}, \\ \\
\displaystyle \left(1 + \varepsilon_{1x} (\bar y, \bar x) \right) \frac{\partial \bar y}{\partial x} = - 1 + F^\prime \left(y + \varepsilon_2(x,y)\right)\cdot \varepsilon_{2x} (x,y)
 - \varepsilon_{1x} (x,y) - \varepsilon_{1y}(\bar y,\bar x) \frac{\partial \bar x}{\partial x}, \\ \\
\displaystyle \left(1 + \varepsilon_{1x} (\bar y, \bar x) \right) \frac{\partial \bar y}{\partial y} = F^\prime \left(y + \varepsilon_2(x,y)\right)\cdot (1+\varepsilon_{2y} (x,y))  -
\varepsilon_{1y} (x,y) - \varepsilon_{1y}(\bar y,\bar x) \frac{\partial \bar x}{\partial y}.
\end{array}
$$
Solving this system for the partial derivatives, we get the formula (\ref{eq:QRJac}) for the Jacobian.
\end{proof}

Notice that among the perturbations in the form~(\ref{eq:tildeH}) we can select more simple ones which preserve reversibility and destroy conservativity. Let us consider two examples.

\textbf{Example 1.} We consider the case with $\varepsilon_2\equiv 0$. Then the map~(\ref{eq:tildeH}) takes the form
\begin{equation}
\hat H : 
\begin{array}{l}
\bar x = y, \;\; \bar y = -x  + F(y) - \varepsilon_1(x,y) -  \varepsilon_1(\bar y,\bar x)
\end{array}
\label{eq:tildeHe1}
\end{equation}
and the Jacobian of this map
\begin{equation}\label{eq:J_eps3y}
J = \frac{1 + \varepsilon_{1x}(x,y)}{1 + \varepsilon_{1x}(\bar y,\bar x)}
\end{equation}
is not 1 generally. Moreover, if, for example, $\varepsilon_{1}(x,y) = a_{20} x^2 + a_{11} xy + a_{02}y^2$, then (since $\bar x = y$)
$$
J = \frac{1 +  a_{11} y + 2 a_{20} x}{1 +  a_{11} \bar x + 2 a_{20} \bar y} = \frac{1 +  a_{11} y + 2 a_{20} x}{1 +  a_{11} y + 2 a_{20} \bar y},
$$
i.e. including quadratic terms $x y$ and $x^2$ into the perturbation $\varepsilon_1$ makes the Jacobian non-constant.

Other particular case of the function $\varepsilon_1(x,y)$ includes, for example,  $\varepsilon_1(x,y) =  x f_1(y) + f_2(y)$, where $f_1(0)=0, f_2(0)=f^\prime_2(0)=0$, i.e. $\varepsilon_1(x,y)$ being linear in $x$. Then $J = (1 + f_1(y))(1 + f_1(\bar x))^{-1}  \equiv 1$.

Let us consider a perturbation with $\varepsilon_1(x,y) = p(x) + q(y)$ and $p'(x) = v(x)$. Then
$\varepsilon_{1x}(x,y) = v(x)$, $\varepsilon_{1x}(\bar y,\bar x)  = v(\bar y)$  and, by (\ref{eq:J_eps3y}),
$$
J = \frac{1+v(x)}{1+v(\bar y)}
$$
Formally, it means that $J\not\equiv 1$. However, for any periodic orbit, the Jacobian $J_n$ of its first return map will be equal to 1.
Indeed, let $M_i(x_i,y_i)$, $i=1,...,n$, be the points of an $n$-periodic orbit $P$. Then, since $x_i = y_{i-1}$, we obtain that
\begin{equation} \label{eq:ruleJ}
J_n = \left. J(\hat{H}^n) \right|_{M_1} = \prod\limits_{i=1}^n  \frac{1+v(y_{i-1})}{1+v(y_{i+1})} \equiv 1
\end{equation}
since the nominator and denominator of this product contain the same factors. This means that any periodic orbit is conservative, any invariant sets with dense subsets of  periodic orbits  (for instance, horseshoes) are also conservative etc.

Moreover, we can claim that the dynamics of map $\hat H$ in the form (\ref{eq:tildeHe1}) with $\varepsilon_1(x,y) = p(x)+q(y)$ is totally conservative,
since this map possesses a smooth invariant measure.

Indeed, as known \cite{R78}, a measure $d\mu=\rho(x,y)dxdy$ is invariant if and only if  the density $\rho(x,y)$ is a fixed point of the Ruelle-Perron-Frobenius operator, i.e.
$$
\rho(x,y)=\frac{\rho \circ \hat{H}^{-1}(x,y)}{|J|}=\frac{1+v(\bar{y})}{1+v(x)} \cdot \rho \circ \hat{H}^{-1}(x,y).
$$
Let us  check that the function $\rho(x,y)=(1+v(y))\cdot (1+v(\bar{y}))$ satisfies this relation. For simplicity, take a point $(x_0,y_0)$ and denote its image by $(x_1,y_1)=\tilde{H}(x_0,y_0)$ and the preimage by $(x_{-1},y_{-1})=\tilde{H}^{-1}(x_0,y_0)$. Then we have
$$\rho(x_0,y_0)=(1+v(y_0))(1+v(y_1)), \;\;\; \rho(x_{-1},y_{-1})=(1+v(y_{-1}))(1+v(y_0)).$$
Since $x_0=y_{-1}$ we obtain
$$
\frac{1+v(y_1)}{1+v(x_0)} \cdot \rho(x_{-1},y_{-1})=\frac{1+v(y_1)}{1+v(x_0)} \cdot (1+v(x_0))(1+v(y_0))=\rho(x_0,y_0).
$$
Therefore, the measure
$$\mu(A)=\int_A (1+v(y))\cdot (1+v(\bar{y}))dxdy$$
is invariant for the map $\tilde{H}$.

\textbf{Example 2.}  Consider the case with $\varepsilon_1(x,y) \equiv 0$.
Then map~(\ref{eq:tildeH}) takes the form
\begin{equation}\label{eq:PertMarina1}
\hat{H}^{(2)}: \;\;\; \bar x   = y + \varepsilon_2(x,y) - \varepsilon_2(\bar y, \bar x), \;\;\; \bar y  = -x + F(y+\varepsilon_2(x,y)),
\end{equation}
and
$$
J(\hat{H}^{(2)}) = \frac {1 + \varepsilon_{2y}(x,y)}{1 + \varepsilon_{2y}(\bar y, \bar x)},
$$
i.e., $J$ is not 1 generally. However, not any perturbation $\varepsilon_2$ is suitable. For example, let the function $\varepsilon_{2y}(x,y)= v(x,y)$ be symmetric, i.e.
$v(x,y) = v(y,x)$ (as for $\varepsilon_2 = xy^2$). In this case, $J(\hat{H}^{(2)}|_{(x_i,y_i)}) = (1+v(x_i,y_i))(1+v(x_{i+1},y_{i+1}))^{-1}$, and when calculating the Jacobian of a periodic orbit $\{(x_i,y_i): i=1,\ldots,n, \hat{H}^{(2)}(x_i,y_i)= (x_{i+1}, y_{i+1}), x_{n}=x_1, y_{n}=y_1\}$ as in~(\ref{eq:ruleJ}), we get $J_n = 1$. At the same time, the perturbation  $\varepsilon_2 = \alpha xy$ is quite suitable. Indeed, the Jacobian
$J = (1+\alpha x)(1 + \alpha \bar y)^{-1}$ is not constant for $\alpha\neq 0$ and, moreover, since the function $\varepsilon_2(x,y)$ is linear in $y$, the map (\ref{eq:PertMarina1}) can be represented in the explicit form. Note that such perturbations were considered in \cite{GKSS19} while studying effects of reversible perturbations on 1:3 resonance in the conservative cubic H\'enon maps.

\subsection{Perturbations of nonorientable conservative H\'enon-like maps.} \label{sec:asymH-}

In this section we show that nonorientable conservative H\'enon-like maps also admit  reversible perturbations of the same types that have been considered in the previous sections for orientable maps.

We consider the following nonorientable conservative H\'enon-like map of the form
\begin{equation}
H_{-1}: \;\;\; \bar x = -y, \;\;  \bar y  = -x + F(y).
\label{eq:TH-0}
\end{equation}
It is easy to show that this map is reversible with respect to the involution $h: x\to y, y\to x$, if function $F(y)$ is even, i.e. $F(-y) = F(y)$ (in particular, it follows from Lemma~\ref{lm:Hninveps} below). By analogy with Lemma~\ref{lm:til_H}, we consider the following perturbation
\begin{equation}
\hat{H}_{-1}: \;\;\; \bar x = -y, \;\;  \bar y  = -x + F(y) -  \varepsilon(x,y) - \varepsilon(\bar y,\bar x),
\label{eq:TH-eps}
\end{equation}
where $\varepsilon(x,y)$ is some smooth function.

\begin{lemma}
If $F(y)$ is an even function, $F(y) =F(-y)$,
then the map $\hat{H}_{-1}$, given in~(\ref{eq:TH-eps}), is reversible with respect to the involution $h: x\to y, y\to x$, and
\begin{equation}
	J(\hat H_{-1}) = - \frac{1+\varepsilon_{x}(x,y)}{1+\varepsilon_{x}(\bar y,\bar x)}.
	\label{eq:J-eps0}
\end{equation}
	\label{lm:Hninveps}
\end{lemma}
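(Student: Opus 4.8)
The plan is to establish the two assertions separately: first the reversibility of $\hat H_{-1}$, which is the only place the hypothesis $F(-y)=F(y)$ enters, and then the Jacobian formula~(\ref{eq:J-eps0}), whose derivation uses no evenness at all.

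For the reversibility I would follow the same bookkeeping used for the earlier perturbed maps. First I would write $\hat H_{-1}^{-1}$ by interchanging the barred and unbarred variables $\bar x\leftrightarrow x,\ \bar y\leftrightarrow y$ in~(\ref{eq:TH-eps}), obtaining the (implicit) relations $x=-\bar y$ and $y=-\bar x+F(\bar y)-\varepsilon(\bar x,\bar y)-\varepsilon(y,x)$. Then I would form $h\circ\hat H_{-1}^{-1}\circ h$ by exchanging $x\leftrightarrow y$ and $\bar x\leftrightarrow\bar y$ in these relations, which yields $\bar x=-y$ together with $\bar y=-x+F(\bar x)-\varepsilon(x,y)-\varepsilon(\bar y,\bar x)$. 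Comparing with~(\ref{eq:TH-eps}), the two maps agree precisely when $F(\bar x)=F(y)$; since the first equation gives $\bar x=-y$, this reduces to $F(-y)=F(y)$, which holds by the evenness of $F$. This single identity is the entire content of the hypothesis, and it is the one conceptual point of the argument.

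For the Jacobian I would differentiate~(\ref{eq:TH-eps}) directly. From $\bar x=-y$ one reads off $\partial\bar x/\partial x=0$ and $\partial\bar x/\partial y=-1$, so the Jacobian collapses to $J=-\,(\partial\bar x/\partial y)(\partial\bar y/\partial x)=\partial\bar y/\partial x$, and only the single derivative $\partial\bar y/\partial x$ is needed. Differentiating the second equation with respect to $x$ and applying the chain rule to the composite term $\varepsilon(\bar y,\bar x)$, whose $x$-derivative is $\varepsilon_x(\bar y,\bar x)\,\partial\bar y/\partial x+\varepsilon_y(\bar y,\bar x)\,\partial\bar x/\partial x=\varepsilon_x(\bar y,\bar x)\,\partial\bar y/\partial x$ because $\partial\bar x/\partial x=0$, I would obtain the linear relation $\partial\bar y/\partial x=-1-\varepsilon_x(x,y)-\varepsilon_x(\bar y,\bar x)\,\partial\bar y/\partial x$. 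Solving it gives $\partial\bar y/\partial x=-(1+\varepsilon_x(x,y))/(1+\varepsilon_x(\bar y,\bar x))$, which is exactly~(\ref{eq:J-eps0}).

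Neither part presents a real obstacle: the computation is routine implicit differentiation of the same kind already carried out in Lemmas~\ref{lm:crosjac} and~\ref{lm:J_Htld}. The only things to watch are the careful use of the chain rule through the implicit term $\varepsilon(\bar y,\bar x)$, where the vanishing of $\partial\bar x/\partial x$ is what keeps the expression simple, and the correct placement of the evenness hypothesis, which is invoked at exactly one step of the reversibility check and nowhere else.
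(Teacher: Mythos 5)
Your proposal is correct and follows essentially the same route as the paper: compute the inverse by swapping barred and unbarred variables, conjugate by $h$, and observe that agreement with~(\ref{eq:TH-eps}) forces $F(-y)=F(y)$; then exploit $\partial\bar x/\partial x\equiv 0$ so that $J$ reduces to $\partial\bar y/\partial x$, obtained by implicit differentiation of the second equation. The only cosmetic difference is that you keep the inverse in implicit (cross) form while the paper substitutes $\bar y=-x$ to write $F(-x)$ explicitly, which changes nothing in the argument.
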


\begin{proof} The inverse map $(\hat{H}_{-1})^{-1}$ takes the form
\begin{equation}\label{eq:TH_inv}
(\hat{H}_{-1})^{-1}: \;\;\; \bar x  = - y + F(-x) - \varepsilon(\bar x,\bar y) - \varepsilon(y,x), \;\;\; \bar y = - x.
\end{equation}
After the interchange $x\leftrightarrow y, \bar x \leftrightarrow \bar y$ in (\ref{eq:TH_inv}) we obtain the map $h \circ \hat{H}_{-1} \circ h$ that coincides with map (\ref{eq:TH-eps}), if $F(-y) = F(y)$, i.e., $F(y)$ is an even function.

For map $\hat H_{-1}$ in the form (\ref{eq:TH-eps}), we have that $J(\hat H_{-1}) = - \partial \bar x /\partial y \cdot \partial \bar y /\partial x$, since
$\partial \bar x /\partial x \equiv 0$. Then we have
$$
\frac{\partial \bar x}{\partial y} = -1, \;\; \frac{\partial \bar y}{\partial x} = -1 - \varepsilon_x(x,y) - \varepsilon_x(\bar y,\bar x) \frac{\partial \bar y}{\partial x}.
$$
This gives us the desired formula (\ref{eq:J-eps0}).
\end{proof}

\section{Symmetry breaking bifurcations in reversible perturbations of
H\'enon-like maps} \label{sec:sbrbif}

In this section we consider several examples of two-dimensional reversible maps that are perturbations of H\'enon-like maps and demonstrate reversible symmetry breaking bifurcations \cite{LT12} of fixed points or periodic orbits. Even with arbitrarily small perturbations,
such bifurcations lead to the appearance of dissipative elements of dynamics, although these bifurcations closely follow the corresponding bifurcations in the unperturbed area-preserving maps.
For example, a symmetric couple of elliptic or saddle orbits for the area-preserving map is transformed into  a symmetric couple containing sink and source or saddles with the Jacobians greater and less than 1 in the perturbed map, etc.
The knowledge of these bifurcations and conditions of their realization is very relevant to understand such phenomenon as the appearance of mixed dynamics at (reversible) perturbations of conservative systems \cite{Gon16,GT17,Tur15}. 

\subsection{Symmetry breaking bifurcations in the product of two quadratic H\'enon maps}\label{sec:sbrbif_dggls13}

Note that the product of two non-conservative asymmetric H\'enon maps  $H_1$ and $H_2$ of the form~(\ref{eq:Hlms}) with $F(y) = M - y^2$ appears naturally as the normal forms of first return maps near symmetric quadratic homoclinic or heteroclinic tangencies to symmetric periodic orbits of reversible diffeomorphisms~\cite{DGGLS13,DGGL18}. Accordingly, their local bifurcations under reversible perturbations can play a role of global symmetry breaking bifurcations leading to the onset of reversible mixed dynamics.

In this section we consider bifurcations of this type. They are bifurcations of fixed points in some one-parameter family of reversible maps that unfolds  the product of two non-conservative H\'enon maps  
$$ H_1: \bar x =y, \; \bar y = M - y^2 \text{ and } H_2: \bar x = y, \; b \bar y = M - y^2$$ 
with the Jacobians equal to $b$ and $b^{-1}$, respectively. Their compositions $H_2\circ H_1$ and $H_1^{-1}\circ H_2^{-1}$ are both area-preserving maps, and, moreover, the latter map  $T_2 = H_1^{-1}\circ H_2^{-1}$ can be written in the following cross-form (see formula (\ref{eq:H12-1e}))
$$
T_2 : \;\;\; \bar x = \frac{1}{b} x - \frac{M}{b} + \frac{1}{b}\bar y^2 , \;\;  y = \frac{1}{b} \bar y - \frac{M}{b} + \frac{1}{b} x^2.
$$

To study symmetry breaking bifurcations appearing at reversible perturbations of this map we embed it in the following one-parameter family
\begin{equation}
{T}_{2\mu}: \;\;\;\;
\displaystyle \bar x  = - \frac{M}{b} + \frac{1}{b} x  + \frac{1}{b} \bar y^2 +\mu x \bar y, \;\;\;
\displaystyle y  = - \frac{M}{b} + \frac{1}{b} \bar y  + \frac{1}{b} x^2 + \mu \bar x y,
\label{eq:Teps}
\end{equation}
where $\mu$ is a small parameter. This family is a representative of the class (\ref{eq:H12-1e}) of reversible perturbations given by Lemma~\ref{lm:H12tld}, and, thus, it preserves the reversibility with respect to the involution $h: x\to y, y\to x$.

In Figure~\ref{fig-bdiag8} the main elements of bifurcation diagrams for fixed points of the maps $T_2$ and $T_{2\mu}$ are represented in the $(b,M)$-parameter plane for $\mu=0$ in Figure~\ref{fig-bdiag8}(a) and for a sufficiently small fixed $\mu$ in Figure~\ref{fig-bdiag8}(b). We exclude a small strip containing the axis $b=0$ from the consideration since the maps $T_2$ and $T_{2\mu}$ are not defined for $b=0$. The main bifurcation curves are the following: the fold bifurcation curves $F_1$ and $F_2$, the reversible pitchfork bifurcation curves $PF_1$ and $PF_2$ as well as several period-doubling curves $PD$ that are shown as gray dashed lines.
The equations of the curves are as follows:
$$
\begin{array}{rl}
F_1: & 4 (1+b\mu) M= - (b-1)^2, \text{ where } b<0, \\
F_2: & 4 (1+b\mu) M= - (b-1)^2, \text{ where }  b>0, \\
PF_1: & 4 M = ({3}+b\mu) (b-1)^2, \text{ where } b<0 \\
PF_2: & 4 M = ({3}+b\mu) (b-1)^2, \text{ where } b>0.
\end{array}
$$

In the conservative case $\mu=0$, the curves $F_1$ and $F_2$ correspond to the creation of fixed points of $T_2$. There appears a symmetric parabolic fixed point which is nondegenerate for all parameter values in $F_1$ and $F_2$, except for the point $Q^*(b=1,M=0)\in F_2$. The parabolic point bifurcates into 2 symmetric elliptic  and saddle fixed points. The transition through the point $Q^*(b=1,M=0)$ corresponds to a codimension 2 bifurcation which consists in the emergence of 4 fixed points: 2 symmetric elliptic  and 2 asymmetric saddle fixed points which compose a symmetric couple of points.

\begin{figure}[t]
	\centerline{\epsfig{file=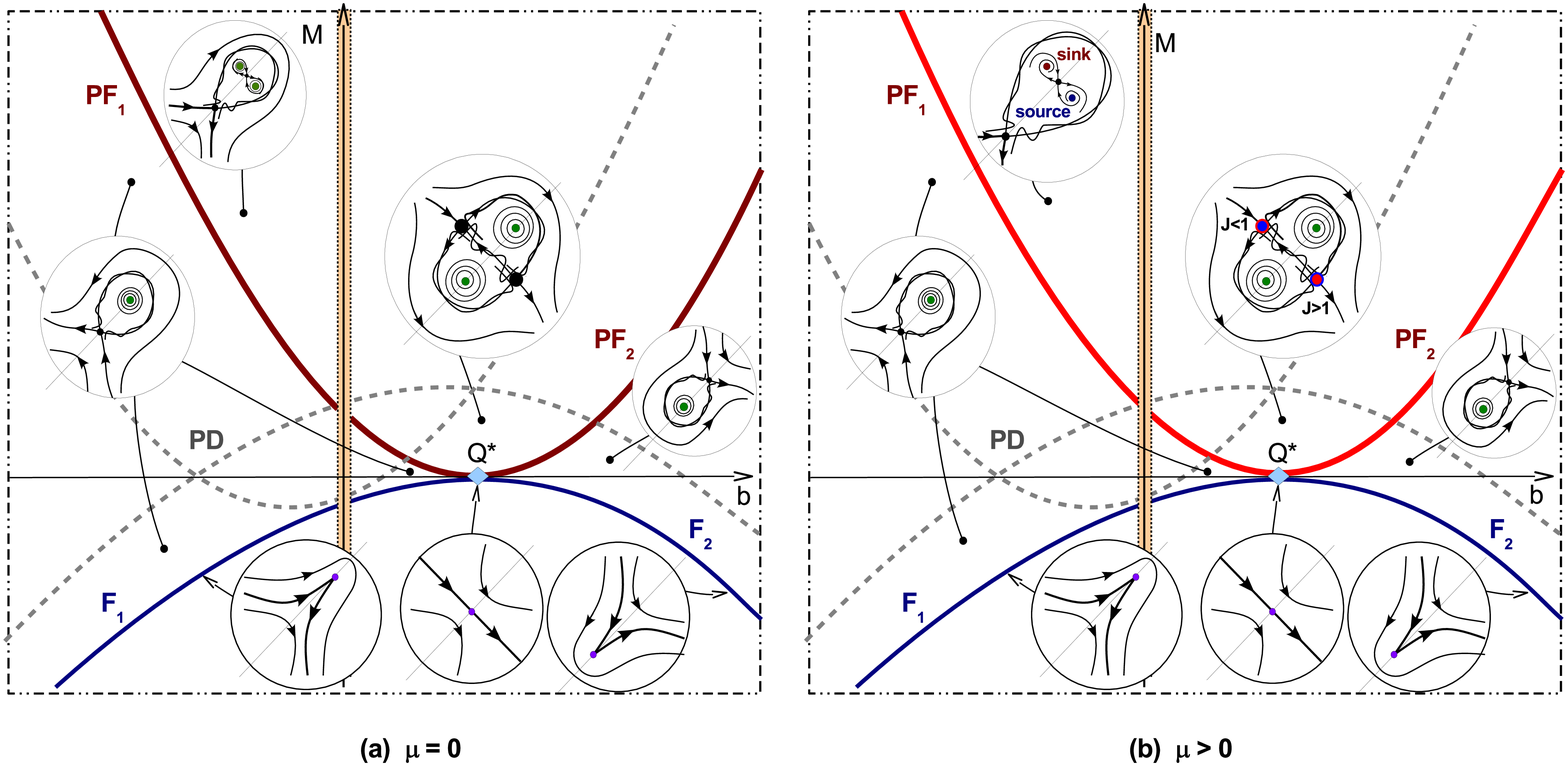, width=18cm}}
	\caption{Elements of the bifurcation diagram in the $(b,M)$-parameter plane for the maps (a) $T_2$ and (b) $T_{2\mu}$  with small fixed $\mu$.}
	\label{fig-bdiag8}
\end{figure}

In the perturbed case $\mu \neq 0$, the character of the fold and period-doubling bifurcations is not changed qualitatively if $\mu$ is sufficiently small.
However,  the pitchfork bifurcations can give rise to non-conservative fixed points. 
It follows from Lemma~\ref{lm:H12tld} that the Jacobian of~(\ref{eq:Teps}) is
\begin{equation}
J= \frac{1+ b\mu\bar y}{1+ b\mu x}
\label{eq:JTepsfp}
\end{equation}

In order to find the fixed points of~(\ref{eq:Teps}), we equate $\bar x=x, \bar y =y$ and obtain the following system
\begin{equation}
\begin{array}{l}
x(b-1) = y^2 - M + b\mu xy, \;\;\; y(b-1) = x^2 - M + b\mu xy.
\end{array}
\label{eq:Tepsfp}
\end{equation}
Subtracting and adding up the equations and taking into account that $x\neq y$ (for asymmetric fixed points) gives us
$$
x+y = 1-b, \;\;\;
xy = \frac {(1-b)^2 -M}{1-b\mu}.
$$
Thus, if
$$
D = \frac{ 4 M - (1-b)^2(3+b\mu)}{4(1-b\mu)}>0,
$$
then two asymmetric fixed points $M_1$ and $M_2$ appear for $T_{2\mu}$:
$$
M_1=\left(\frac{1-b}{2} + \sqrt{D}, \frac{1-b}{2} - \sqrt{D}\right)\;\;\mbox{and}\;\; M_2=\left(\frac{1-b}{2} - \sqrt{D}, \frac{1-b}{2} + \sqrt{D}\right).
$$
These two points are symmetric with respect to the line $y=x$ and they merge  with the corresponding symmetric fixed point under a reversible pitchfork bifurcation (which occurs when $D=0$).

It follows from~(\ref{eq:JTepsfp}) that the Jacobian at the fixed points $M_1$ and $M_2$ is
$$
J_{1} = 1 - \frac{4b\mu\sqrt{D}}{2 + b\mu(1-b + 2\sqrt{D})}, \;\;\;
J_{2} = 1 +  \frac{4b\mu\sqrt{D}}{2 + b\mu(1-b - 2\sqrt{D})},
$$
respectively. Thus, if $b\mu>0$,  then $J_1<1$ and $J_2 = J_1^{-1}> 1$.

The topological type of these points (for small $\mu$) is easily determined from the conservative approximation $\mu=0$, see Figure~\ref{fig-bdiag8}a. In the case $b<0$, the points $M_1$ and $M_2$ compose a symmetric couple of elliptic fixed points for $\mu=0$. For $\mu\neq 0$, they are transformed into a symmetric couple of ``sink-source'' fixed points: the points  $M_1$ and $M_2$ become stable and unstable foci, respectively, if $\mu>0$.
In the case $b>0$, the points  $M_1$ and $M_2$ become non-conservative saddles for $\mu\neq 0$: with the Jacobians $J_1<1$ and $J_2>1$, respectively, if $\mu>0$.

We also note that in the case $|b|=1$, the map $T_{2\mu}$ of the form (\ref{eq:Teps}) gives an example of a reversible perturbation for the second iteration of the conservative H\'enon map, the orientable one at $b=-1$ and nonorientable at $b=+1$. However, these perturbations are not suitable for the H\'enon maps themselves. Thus, at $b=-1$, the curve $PF_1$ is, in fact, the period-doubling curve for a symmetric fixed point which means that proper reversible perturbations can not lead to symmetry breaking. In the next sections, we consider the questions on correct reversible perturbations for the H\'enon maps, nonorientable and orientable, and on the structure of the accompanying symmetry breaking bifurcations.

\subsection{Symmetry breaking bifurcations in the nonorientable reversible H\'enon maps. } \label{sec:bifnor}

As an example we consider now the nonorientable H\'enon map $H_{-1}$ of the form 
\begin{equation}
\bar x = -y, \bar y = -M -x + y^2 ,
\label{eq:HM-}
\end{equation}
that is a particular case of the map~(\ref{eq:TH-0}).

\begin{figure} [t]
	\centerline{\epsfig{file=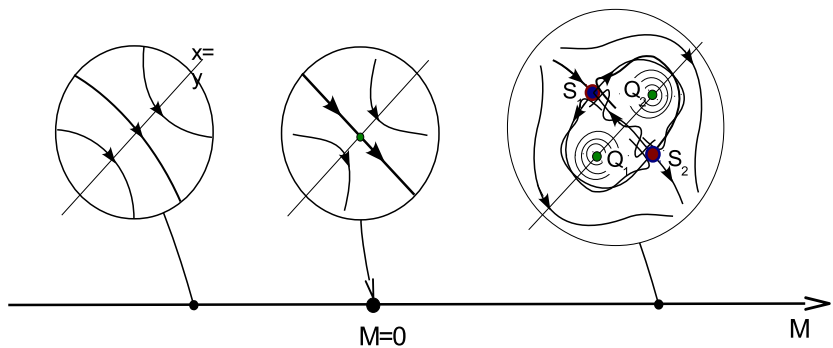, width=12cm}}
	\caption{{\footnotesize Main bifurcations in maps (\ref{eq:HM-}) and (\ref{eq:Hen-mu}) when $\mu$ is small and fixed and $M$ changes.
			The points $Q_1$ and $Q_2$ are symmetric elliptic 2-periodic orbits, while the points $S_1$ and $S_2$ are nonorientable saddle fixed points that compose a symmetric couple of points. The points $S_1$ and $S_2$ are conservative with the Jacobian $-1$ for $\mu=0$ and non-conservative with the Jacobians  $J_1<-1$ and $-1<J_2=J_1^{-1}<0$, respectively, for $\mu>0$.} }
	\label{fig-bdiagH2}
\end{figure}

For $M<0$, the map~(\ref{eq:HM-})  has no fixed points or periodic orbits. However, they appear immediately for $M>0$ under the so-called fold-flip bifurcation occurring  at $M=0$ when the map has a fixed point $P(0,0)$ with eigenvalues $+1$ and $-1$. For $M>0$, this point splits into 4 points, see Figure~\ref{fig-bdiagH2}, two of them are the fixed  points $S_1$ and $S_2$, and the other two points form a 2-periodic orbit $(Q_1,Q_2)$, i.e. $H_{-1}(Q_1)=Q_2, \; H_{-1}(Q_2)=Q_1$. Note that points $Q_1$ and $Q_2$ are elliptic 2-periodic orbits, and they are also symmetric since they both belong to the symmetry line $x=y$. In contrast, the points $S_1$ and $S_2$ are saddles and they compose a symmetric couple of points, i.e. $h(S_1)=S_2$ and $h(S_2)=S_1$. The coordinates of these points are
$$
Q_1 = (-\sqrt{M},-\sqrt{M}),\; Q_2 = (\sqrt{M},\sqrt{M}),\; S_1 = (-\sqrt{M},\sqrt{M}),\; S_2 = (\sqrt{M},-\sqrt{M}).
$$
All these points are conservative points of the map~(\ref{eq:HM-}).

However, due to Lemma~\ref{lm:Hninveps}, adding reversible perturbations  we can destroy the conservativity of the fixed points $S_1$ and $S_2$.
For example, let us consider the following perturbed map
\begin{equation}
\tilde{H}_{-1\mu}: \;\;\; \bar x = -y, \;\;\; \bar y + \mu \bar x \bar y = -M -x + y^2 - \mu xy,
\label{eq:Hen-mu}
\end{equation}
where we have chosen the perturbation $\varepsilon(x,y)=\mu x y$, being $\mu$ a small parameter.
By Lemma~\ref{lm:Hninveps}, this map is reversible with respect to the involution $h$, however, it is no longer conservative for $\mu \neq 0$. Indeed, formula~(\ref{eq:J-eps0}) reads as
$$
J = - \frac{1+\mu y}{1+\mu \bar x} = - \frac{1+\mu y}{1-\mu y}.
$$
The fixed points of map (\ref{eq:Hen-mu}) are easily found:
$S_1 = (-a(\mu), a(\mu)),\; S_2 = (a(\mu), -a(\mu))$, where $
a(\mu) = \sqrt{{M}/{(1+2\mu)}}.
$
Then
we have that the Jacobian at the points $S_1$ and $S_2$ are
$$
J_1= - 1 - \frac{2\mu \sqrt{M}}{\sqrt{1+2\mu} - \mu \sqrt{M}},\;\;\; J_2= - 1 + \frac{2\mu \sqrt{M}}{\sqrt{1+2\mu} + \mu \sqrt{M}},
$$
respectively.
Thus, if  $M>0$ and $\mu >0$ is not very large, the points $S_1$ and $S_2$ compose a symmetric couple of (nonorientable) saddles with the Jacobians $J_1<-1$ and $-1<J_2=J_1^{-1}<0$.

\subsection{Symmetry breaking bifurcations in the orientable reversible H\'enon maps.} \label{sec:sborH}

As an example we consider the standard area-preserving and orientable H\'enon map $H_{+1}$ of the form 
\begin{equation}
\bar x = y, \;\; \bar y = M - x - y^2, \;\;  .
\label{eq:HM+ }
\end{equation}
Bifurcations of its fixed points are well-known and include a parabolic bifurcation at $M=-1$, giving rise to symmetric elliptic and saddle fixed points, and a conservative period-doubling bifurcation of the fixed elliptic point at $M=3$, after which the elliptic fixed point becomes saddle and an elliptic 2-periodic orbits are born. Besides, when $M$ changes from $M=-1$ to $M=3$, the symmetric elliptic fixed point undergoes infinitely many bifurcations related to the appearance of resonant periodic orbits of period $q$ in its neighbourhood -- whenever the eigenvalues $e^{\pm i\varphi}$ pass through the values $\varphi = 2\pi \frac{p}{q}$, where $p$ and $q$ are mutually prime natural numbers and $p<q$.

However, as it is well-known, the fixed points and 2-periodic orbits in the H\'enon map are symmetric. The resonant periodic points are also symmetric if the resonances are nondegenerate. Thus, 3-periodic and 5-periodic resonant orbits are symmetric.
Although, the 1:4 resonance (related to eigenvalues $e^{\pm i\pi/2}=\pm i$) is degenerate in the H\'enon map \cite{Bir87, SimoVieiro09} (the so-called Arnold degeneracy \cite{Arn} takes place here), this bifurcation is not of symmetry breaking type \cite{LT12}.

A simple calculation of the number of points in periodic orbits of periods 1, 2, 3 and 4 (this number cannot be greater than $2^n$, by Bezout's theorem, even if we
include all points of periodic orbits of periods divisors of $n$)
\begin{itemize}
\item
period 1 (fixed points) -- two points;
\item
period 2 -- two points (we exclude 2 fixed points) that compose one 2-periodic orbit appearing after the period-doubling bifurcation of the elliptic fixed point;
\item
period 3 -- 6 points (we exclude 2 fixed points and the remaining 6 points compose two   (elliptic and saddle) 3-periodic orbits accompanying the 1:3 resonance);
\item
period 4 -- 12 points (we exclude 2 fixed points and 2 points of the 2-periodic orbits and, thus, the remaining 12 points form two 4-periodic orbits born from the 1:4 resonance and one 4-periodic orbit appearing after a period-doubling bifurcation of the  elliptic 2-periodic orbit);
\end{itemize}
shows that there are no asymmetric periodic orbits of these periods. 

The case of period 5 points is more delicate. If two fixed points are excluded, then 30 more points remain. 20 such points compose four 5-periodic orbits born from the 1:5 and 2:5 resonances. Concerning remaining 10 points, they appear at a symmetric parabolic bifurcation of 5-periodic orbit. The last bifurcation we have found numerically at $M=5.5517$. The $y$-coordinates of the corresponding symmetric parabolic point is $y_1=y_2=-2.243751084, y_3=y_5=2.761032157, y_4=0.172152512$.

The calculation of the number of points of 6-periodic orbits shows the following. There are 64 such points in total. They include 10 points of smaller periods: 2 fixed points, 2 points of the 2-periodic orbit and 6 points of the 3-periodic orbits. The remaining 54 points form 9  orbits of period 6. Among them, 5 orbits are symmetric -- two 6-periodic orbits are born from the 1:6 resonance, one periodic orbit appears via the period-doubling bifurcation of the elliptic 3-periodic orbit, and two orbits arise due to the 1:3 resonance of a 2-periodic elliptic orbit.

The remaining 4  orbits of period 6 may be asymmetric.
For example, some of these orbits can appear as a result of a symmetry breaking bifurcation when a symmetric couple of two parabolic 6-periodic orbits appears and then splits into
two symmetric couples of elliptic and saddle 6-periodic orbits. Other possible cases can be related to two bifurcations of symmetric 6-periodic orbits and at least one of these bifurcations is a pitchfork bifurcation. We show below, see Section~\ref{sec:FO6}, that the second possibility is indeed realized in the H\'enon map  $H_{+1}$.

We found numerically one couple of such orbits $O_6^1$ and $O_6^2$. In particular, for $M=4$, the orbit $O_6^1 =\{(x_i,y_i)\}$, $i=1,...,6$, where
$x_{i+1} = y_i$, has the following $y$-coordinates
$$
\begin{array}{l}
y_1=2.114907541,\;\;\; y_2=-1.935432332, \;\;\; y_3=-1.860805853, \\
y_4=2.472833909, \;\;\; y_5=-0.254101688, \; \;\; y_6=1.462598423.
\end{array}
$$
The orbit $O_6^2$ is symmetric to $O_6^1$ and, thus, has coordinates $\tilde x_i = y_i$ and $\tilde y_i = \tilde x_{i+1}$.

Now we consider the reversible perturbation of the H\'enon map  $H_{+1}$ as follows
\begin{equation}
\tilde{H}_{+1\mu}: \;\;\; \bar x = y, \bar y  + \mu (\bar x\bar y + \bar y^2)= M -x - y^2 - \mu  (xy + x^2),
\label{eq:revHen+}
\end{equation}
that preserve reversibility of the H\'enon map due to Lemma~\ref{lm:til_H}, see also Example~1 for $\varepsilon_1(x,y) = \mu (xy + y^2)$.

We note that in the perturbed map (\ref{eq:revHen+}) the orbit $O_6^1$ has at $\mu = 0.01$ the following $y_i$-coordinates (here again $x_{i+1}=y_i$)
$$
\begin{array}{l}
y_1=2.107429699, \;\;\; y_2=-1.911473368, \;\;\; y_3=-1.833980679, \\
y_4=2.460965013, \;\;\; y_5=-0.2062196180, \;\;\; y_6=1.423687035.
\end{array}
$$

We calculate the Jacobian of map~(\ref{eq:revHen+}) at $O_6^1$
$$
J= \prod_{i=1}^{6} \frac{1+\mu y_i + 2\mu y_{i-1}}{1 + \mu y_i + 2\mu y_{i+1}}
$$
and obtain that $J= 0.9999999555$.

\subsubsection{Search of the asymmetric 6-periodic orbit} \label{sec:FO6}

It is very surprising that the main bifurcations related to the appearance of 6-periodic orbits $O_6^1$ and $O_6^2$ can be studied analytically due to the fact that these orbits are born as a result of a symmetry breaking bifurcation of a symmetric 6-periodic orbit.

The corresponding bifurcation scenario starts at the value $M=M_1=\frac{5}{4}$ when an elliptic  3-periodic orbit $O_3$ undergoes a supercritical period-doubling bifurcation after which the orbit $O_3$ becomes a symmetric saddle 3-periodic orbit and  a symmetric elliptic 6-periodic orbit $\tilde O_6$ is born in its neighbourhood. Then increasing $M$, two successive period-doubling bifurcations  of the orbit $\tilde O_6$, supercritical (at $M=M_2\approx 1.2813$) and subcritical (at $M=M_3\approx 2.98038$), take place. For $M_2<M<M_3$ the orbit $\tilde O_6$ is saddle, and for $M>M_2$ it becomes elliptic again. An important bifurcation occurs at $M=M_4=3$ when the orbit $\tilde O_6$ undergoes a pitchfork bifurcation after which the orbit $\tilde O_6$ becomes a symmetric saddle 6-periodic orbit and a symmetric couple of elliptic 6-periodic orbits $O_6^1$ and $O_6^2$ is born, see Figure~\ref{Henon2D_PF}.

\begin{figure} [t]
	\centerline{\epsfig{file=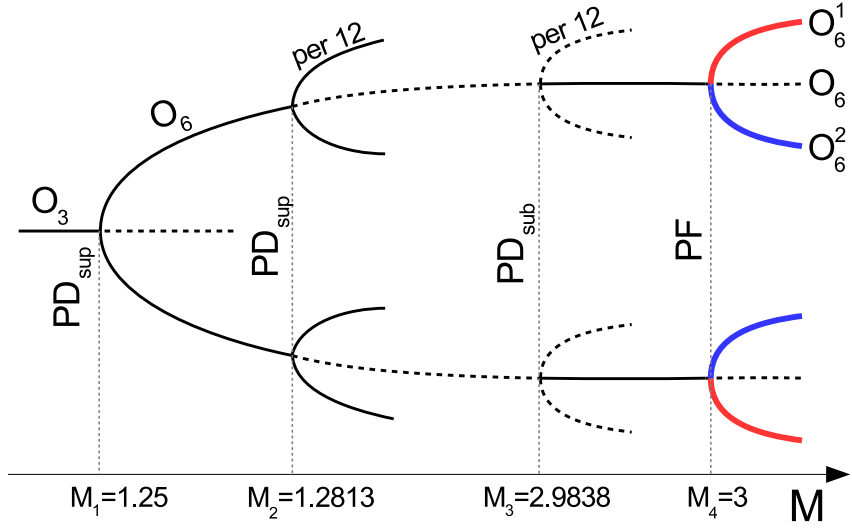, width=12cm}}
	\caption{{\footnotesize A schematic tree for the bifurcation scenario of the appearance of a symmetric couple of 6-periodic orbits in the H\'enon map $H_{+1\mu}$, $\mu=0.01$. } }
	\label{Henon2D_PF}
\end{figure}

Let us explain this scenario in more detail. The 3-periodic orbits appear in the H\'enon map $H_{+1}$ at $M=1$ under a symmetric parabolic bifurcation leading to the birth of elliptic and saddle orbits $O_3$ and $S_3$. At $M=M_1=\frac{5}{4}$ the saddle orbit $S_3$ merges with the fixed point  $O_{1:3}(\frac{1}{2},\frac{1}{2})$ with eigenvalues $e^{\pm 2\pi/3}$ (the 1:3 resonance) and simultaneously (exactly at this moment $M=M_1=\frac{5}{4}$) the elliptic orbit $O_3$ undergoes a supercritical period-doubling bifurcation giving rise to  elliptic and saddle 6-periodic orbits, see Figure~\ref{Henon2D_PF_portr} (a)-(d).  The elliptic orbit is the orbit $\tilde O_6$. As this orbit is symmetric, it has two intersection points with the line $x=y$. Let $\tilde O_6 = \{P_i(x_i,y_i)\},\; i=1,2,...,6$. Then the coordinates $(x_i,y_i)$ satisfy the following equations
$$
x_{i+1} = y_i,\; y_{i+1} = M - x_i - y_i^2, \;\; 
$$
where $i = 1,..., 6$. 
Since $x_{i+1} = y_i$, we can reduce this system to the following system of 6 quadratic equations
\begin{equation}
y_{2} = M - y_{6} - y_1^2, \; y_{3} = M - y_{1} - y_2^2, \; y_{4} = M - y_{2} - y_3^2, \\
y_{5} = M - y_{3} - y_4^2, \; y_{6} = M - y_{4} - y_5^2, \; y_{1} = M - y_{5} - y_6^2 .
\label{eq:6eqy}
\end{equation}

\begin{figure} [t]
	\centerline{\epsfig{file=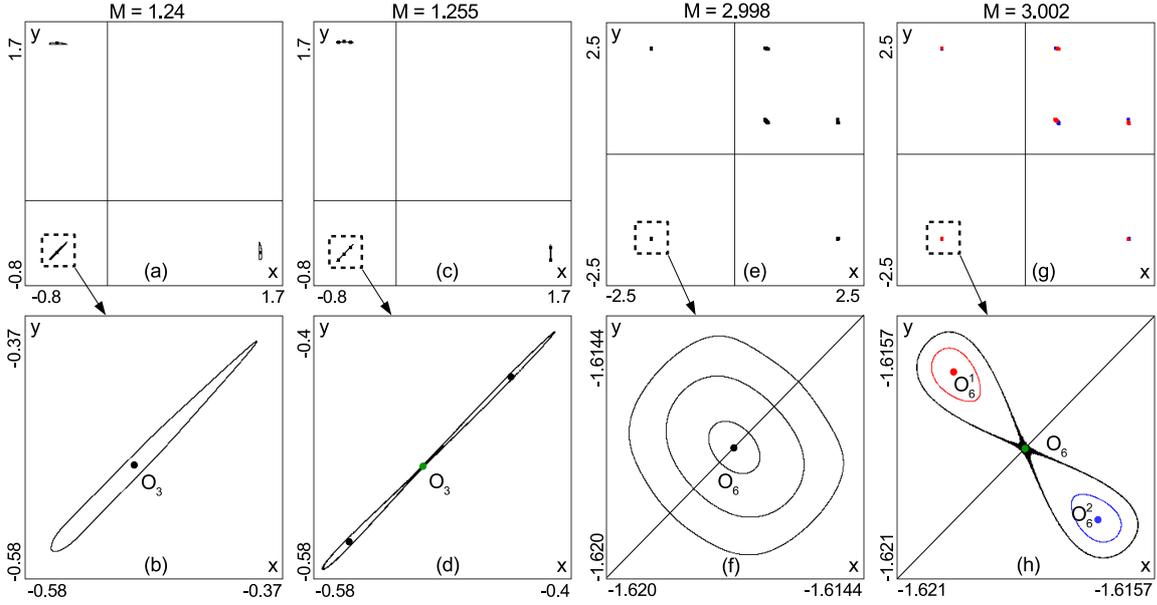, width=16cm}}
	\caption{{\footnotesize Phase portraits of 3- and 6-periodic orbitsfor the H\'enon map $H_{+1}$: the bottom plots are magnifications of some important details of the top plots.    } }
	\label{Henon2D_PF_portr}
\end{figure}

Assume that the point $P_1(x_1,y_1)$ of $\tilde O_6$ belongs to the symmetry line $x=y$, i.e. $x_1=y_1$. Then the point $P_4=H_{+1}^3(P_1)$ is also symmetric, i.e. $x_4=y_4$. Since $x_1=y_6$ and $x_4 =y_3$ and, hence,
$y_6=y_1$ and $y_3=y_4$. Then we get from the first and the last equations of (\ref{eq:6eqy}) that $y_2 = y_5$ and, thus, the system (\ref{eq:6eqy}) is reduced to the following system of three equations
\begin{equation}
y_{1} + y_2 = M - y_1^2, \; y_{1} + y_3 = M - y_2^2, \; y_{2} + y_3  = M - y_3^2, \\
\label{eq:3eqy}
\end{equation}
From the first and last equations of (\ref{eq:3eqy}) we obtain the relation
$y_1-y_3 = (y_3-y_1)(y_3 + y_1)$. If $y_1=y_3$, then the corresponding orbit has period 3. It follows that $y_1+y_3 = -1$. The second equation of (\ref{eq:3eqy}) gives us that $y_2^2 = M+1$. Then $y_1$ and $y_3$ satisfy the relations
$$
y_1^2 +y_{1} - M \pm \sqrt{M+1} =0, \; y_3^2 +y_{3} - M \pm \sqrt{M+1} =0.
$$
These two equations have the same solution, but $y_1$ and $y_3$ should take different values. Then, assuming for more definiteness that $y_3>y_1$, we find the following $y_i$-coordinates for the two symmetric 6-periodic orbits $\tilde O_6^1$ and $\tilde O_6^2$: for $\tilde O_6^1$
$$
y_1=y_6 = \frac{1}{2}\left(-1-\sqrt{1-4\sqrt{M+1}+4M} \right), y_2=y_5 =\sqrt{M+1}, y_3=y_4 = \frac{1}{2}\left(-1+\sqrt{1-4\sqrt{M+1}+4M} \right);
$$
for $\tilde O_6^2$
$$
y_1=y_6 = \frac{1}{2}\left(-1-\sqrt{1+4\sqrt{M+1}+4M} \right), y_2=y_5 =-\sqrt{M+1}, y_3=y_4 = \frac{1}{2}\left(-1+\sqrt{1+4\sqrt{M+1}+4M} \right).
$$

We stress that the orbit $\tilde O_6^1$ is born at $M = \frac{5}{4}$ (when $1-4\sqrt{M+1}+4M = 0$) under a supercritical period-doubling bifurcation of the elliptic 3-periodic orbit, while the orbit $\tilde O_6^2$ appears at $M = \frac{-3}{4}$ (when $1 + 4\sqrt{M+1}+4M = 0$) via a bifurcation of the 1:6 resonance fixed point.

Further we consider only the orbit $\tilde O_6^1$. In the analysis of its bifurcations we find the trace $Tr$ of the characteristic matrix for the map $H_{+1}^6$ at some point of $\tilde O_6^1$. As a result, we obtain that
$$
Tr = 86+24\sqrt{M+1}+116M-128\sqrt{M+1}M+96M^2-128\sqrt{M+1}M^2+64M^3.
$$
If we denote $\sqrt{M+1}=x$ ($x>0$), we obtain the polynomial
$$
Tr(x) = 2 + 24x + 116 x^2 + 128 x^3 -96x^4 -128x^5 + 64 x^6 = 2+ 4 x (2 x + 1)^3 (2 x - 3) (x - 2)
$$
Thus, the equation $Tr(x)=2$ has the solutions $x=0$, $x=-\frac{1}{2}$ (the triple root) and two positive solutions $x =\frac{3}{2}$ and $x=2$. The root $x=\frac{3}{2}$ corresponds to the value $M = \frac{5}{4}$ when the orbit $\tilde O_6^1$ is born.
The root $x=2$ corresponds to the value $M = 3$ when the symmetric elliptic orbit $\tilde O_6^1$ undergoes a pitchfork bifurcation -- the elliptic orbit becomes symmetric saddle and a symmetric couple of elliptic 6-periodic orbits $O_6^1$ and $O_6^2$ emerges, see Figure~\ref{Henon2D_PF_portr}(e)-(h). Namely, the orbits $O_6^1$ and $O_6^2$ are considered in Section~\ref{sec:sborH}. \\~\\

\textbf{Acknowledgements.} The authors thank D.V. Turaev and A.O. Kazakov for very useful remarks. This work is supported by the Russian Science Foundation under grants 19-11-00280 (Sections 1, 2 and 3) and 19-71-10048 (subsection 4.3). The authors thank also the Russian Foundation for Basic Research, project nos. 19-01-00607 (subsection 3.3) and 18-29-10081 (subsection 2.5) for support of scientific researches.
Numerical experiments in Section 4 were supported by the Laboratory of Dynamical Systems and Applications NRU HSE, of the Russian Ministry of Science and Higher Education (Grant No. 075-15-2019-1931). S.G. and K.S. acknowledge the financial support of the Ministry of Science and Higher Education of Russian Federation (Project \#0729-2020-0036).
M.G.  was partially supported by the Spanish grants Juan de la Cierva-Incorporaci\'on IJCI-2016-29071, PGC2018-098676-B-I00 (AEI/FEDER/UE) and the Catalan grant 2017SGR1374.

\end{document}